\documentclass[reqno,a4paper,12pt]{amsart} 

\usepackage{amsmath,amscd,amsfonts,amssymb}
\usepackage{mathrsfs,dsfont}

\usepackage{tikz}

\numberwithin{equation}{section}
\numberwithin{figure}{section}

\addtolength{\hoffset}{-1.5cm}
\addtolength{\textwidth}{3cm}

\parskip .06in

\newcommand\R{\mathbb{R}}

\newcommand\Z{\mathbb{Z}}

\newcommand\gam{\gamma}
\newcommand\Gam{\Gamma}
\newcommand\lam{\lambda}
\newcommand\Lam{\Lambda}
\newcommand\Sig{\Sigma}

\newcommand\Om{\Omega}

\renewcommand\le{\leqslant}
\renewcommand\ge{\geqslant}
\renewcommand\leq{\leqslant}
\renewcommand\geq{\geqslant}
\newcommand\sbt{\subset}

\newcommand{\dotprod}[2]{\langle #1 , #2 \rangle}

\newcommand{\half}{\tfrac{1}{2}}

\theoremstyle{plain}
\newtheorem{thm}{Theorem}[section]
\newtheorem{lem}[thm]{Lemma}

\newtheorem{prop}[thm]{Proposition}

\newtheorem*{claim*}{Claim}

\newcommand{\thmref}[1]{Theorem~\ref{#1}}
\newcommand{\secref}[1]{Section~\ref{#1}}

\newcommand{\lemref}[1]{Lemma~\ref{#1}}
\newcommand{\defref}[1]{Definition~\ref{#1}}
\newcommand{\propref}[1]{Proposition~\ref{#1}}

\theoremstyle{definition}
\newtheorem{definition}[thm]{Definition}
\newtheorem*{definition*}{Definition}
\newtheorem*{remarks*}{Remarks}
\newtheorem*{remark*}{Remark}

\newenvironment{enumerate-roman}
{\begin{enumerate}
\addtolength{\itemsep}{5pt}
}
{\end{enumerate}}

\newenvironment{enumerate-alph}
{\begin{enumerate}
\addtolength{\itemsep}{5pt}
}
{\end{enumerate}}

\newenvironment{enumerate-num}
{\begin{enumerate}
\addtolength{\itemsep}{5pt}
}
{\end{enumerate}}

\newenvironment{enumerate-text}
{\begin{enumerate}
\addtolength{\itemsep}{5pt}
}
{\end{enumerate}}

\begin{document}

\title[Riesz bases of exponentials for convex polytopes]
{Riesz bases of exponentials for convex polytopes with symmetric faces}

\author{Alberto Debernardi}
\address{Department of Mathematics, Bar-Ilan University, Ramat-Gan 5290002, Israel}
\email{adebernardipinos@gmail.com}

\author{Nir Lev}
\address{Department of Mathematics, Bar-Ilan University, Ramat-Gan 5290002, Israel}
\email{levnir@math.biu.ac.il}

\date{January 26, 2020}
\subjclass[2010]{42B10, 42C15, 52B11, 94A20}
\keywords{Riesz bases, sampling and interpolation, convex polytopes}
\thanks{Research supported by ISF Grants No.\ 447/16 and 227/17 and ERC Starting Grant No.\ 713927.}

\begin{abstract}
We prove that for any convex polytope $\Omega \subset \mathbb{R}^d$ which is centrally symmetric and whose faces of all dimensions are also centrally symmetric, there exists a Riesz basis of exponential functions in the space $L^2(\Omega)$. The result is new in all dimensions $d$ greater than one.
\end{abstract}

\maketitle


\section{Introduction} \label{secI1}

\subsection{}
Let $\Om \subset \R^d$ be a bounded, measurable set of positive 
measure. When is it possible to find  a countable set   $\Lam \sbt
\R^d$ such that the system of exponential functions
\begin{equation}
	\label{eqI1.1}
	E(\Lambda)=\{e_\lambda\}_{\lambda\in \Lambda}, \quad e_\lambda(x)=e^{2\pi
	i\dotprod{\lambda}{x}},
\end{equation}
constitute a basis in the space $L^2(\Om)$?

The answer depends on what we mean by a ``basis''. 
The best one can hope for is to have an \emph{orthogonal basis}
of exponentials.
The problem of which domains admit an orthogonal basis $E(\Lam)$
has been extensively studied and goes back to
Fuglede \cite{Fug74}, and it is well-known that  many
reasonable domains  do not have such a basis.
For example, it was recently proved in \cite{LM19} that 
a convex domain $\Om \sbt \R^d$ admits
an orthogonal basis of exponential functions  if and only if $\Om$
is a convex polytope which can tile the space 
by translations (meaning that one can fill the whole space by
translated copies of $\Om$ with non-overlapping interiors).
In particular, a disk or a triangle in the plane does not have
an orthogonal basis $E(\Lam)$ (this was shown
already in \cite{Fug74}).

If a domain $\Om \sbt \R^d$
 does not have an orthogonal basis of exponentials,
then a \emph{Riesz basis} is the next best thing one can hope for.
A Riesz basis can be defined as the image of an orthonormal basis 
under a bounded and invertible linear map (there are also other,
 equivalent definitions, see 
 \secref{sect:prelim}) and it shares many of the qualities of
an orthonormal basis. In particular, if $E(\Lam)$ is a Riesz basis
 in $L^2(\Om)$ then any function $f$ from the space
has a unique and stable Fourier series expansion 
$f = \sum_{\lam \in \Lam} c_\lam e_\lam$.

In sharp contrast to the situation with orthogonal bases,
no single example is known of a set $\Om \sbt \R^d$ 
which does not have a Riesz basis of exponentials.
At the same time, for most domains it remains unknown
whether one can construct a Riesz basis of this type.
One of the relatively few results obtained in this direction says that 
any finite union of intervals in $\R$ has a Riesz 
basis of exponentials \cite{KN15} (see also \cite{KN16}
for a multi-dimensional version of this result).
However it is still an open problem of whether, say,
a disk or a triangle in the plane admits a Riesz basis $E(\Lam)$.

\subsection{}
The present paper is concerned with the existence of Riesz bases of 
exponentials for \emph{convex polytopes} in $\R^d$. Our main result can
be stated as follows:

\begin{thm}
\label{thmA1}
Let $\Om \sbt \R^d$ be a convex polytope which is centrally
symmetric and all of whose faces 
of all dimensions are also centrally symmetric. 
Then there is a set $\Lam \sbt \R^d$ such that
the system of exponential functions $E(\Lam)$
is a Riesz basis in $L^2(\Om)$.
\end{thm}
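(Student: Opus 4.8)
We begin by reducing to the case where $\Om$ is a \emph{zonotope}. Indeed, the hypothesis in particular forces every $2$-dimensional face of $\Om$ to be centrally symmetric, and by the classical characterization of zonotopes this already implies that $\Om$ is a Minkowski sum of finitely many line segments; conversely, every zonotope is centrally symmetric and so are all of its faces. Thus, after a harmless translation,
\[
\Om \;=\; [0,v_1]+[0,v_2]+\cdots+[0,v_N], \qquad [0,v]:=\{tv:0\le t\le 1\},
\]
with $v_1,\dots,v_N\in\R^d$ spanning $\R^d$ (otherwise $\Om$ would be a null set). For $d=1$ this is an interval and the statement is elementary, so the real content lies in $d\ge 2$; the argument below is uniform in $d$.

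The plan is to dissect $\Om$ into pieces each of which trivially carries an exponential Riesz basis, and then to glue these bases together. For the dissection I would use the existence of a \emph{fine zonotopal tiling}: the zonotope $\Om$ admits a partition, up to a set of measure zero,
\[
\Om \;=\; \bigcup_{j=1}^{n}\,(P_j+t_j),
\]
where each $P_j$ is the parallelepiped spanned by a linearly independent $d$-subset $\{v_i:i\in\sigma_j\}$ of the generators and $t_j\in\R^d$. Such a tiling always exists (one may build it from a generic linear functional on $\R^d$), and the identity $\sum_{j}|\det(v_i:i\in\sigma_j)|=\operatorname{vol}(\Om)$ confirms that it is a genuine multiplicity-one tiling of $\Om$. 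The point of this decomposition is that each parallelepiped $P_j$ is a fundamental domain for the lattice $\Lam_j\sbt\R^d$ generated by its edge vectors; hence $P_j$ lattice-tiles $\R^d$, and since a fundamental domain of a lattice $\Lam$ is spectral — indeed $E(\Lam^{*})$ is an orthonormal basis of its $L^2$ space, $\Lam^{*}$ being the dual lattice — $L^2(P_j)$ possesses the orthonormal exponential basis $E(\Lam_j^{*})$. We may also assume $t_1=0$.

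The heart of the proof is the gluing step, which I would isolate as a general combining principle: \emph{if a bounded set $\Om\sbt\R^d$ is partitioned, modulo null sets, into finitely many translated pieces $P_j+t_j$, each $P_j$ a fundamental domain of some lattice, then $L^2(\Om)$ has a Riesz basis of exponentials.} Granting this, \thmref{thmA1} is immediate from the previous paragraph. I expect this to be the main obstacle, and here is the source of the difficulty: for a generic zonotope the lattices $\Lam_j$ are mutually \emph{incommensurable}, so there is no single lattice with respect to which all the $P_j$ are fundamental domains; one therefore cannot present $\Om$ as a lattice multi-tile and invoke the known criterion that a lattice multi-tile admits an exponential Riesz basis. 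Instead I would argue by induction on $n$: assuming an exponential Riesz basis with explicit frame bounds has been constructed for $(P_1+t_1)\cup\cdots\cup(P_{m-1}+t_{m-1})$, adjoin to it a structured set of exponentials attached to the new piece $P_m+t_m$ — morally a perturbed coset family $E(\Lam_m^{*}+a)$, whose natural density $\operatorname{vol}(P_m)$ matches the volume added — and control the interaction between old and new exponentials through quantitative Bessel and lower-Riesz estimates. This is the $\R^d$, arbitrary-parallelepiped counterpart of the scheme used in \cite{KN15} to handle finite unions of intervals; the incommensurability rules out periodic spectra and forces one to propagate the stability constants carefully through the iteration, which is where the genuine work is concentrated.

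Finally, one assembles the proof: for $d=1$ it is the elementary statement about intervals, while for $d\ge2$ one feeds the fine zonotopal tiling of $\Om$ into the combining principle, with a single parallelepiped (where $E(\Lam^{*})$ is already orthonormal) as the base case of the induction. The resulting frequency set $\Lam$ is, by construction, a finite union of perturbed lattice cosets, one associated with each tile of the zonotopal decomposition of $\Om$.
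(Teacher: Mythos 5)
Your reduction to zonotopes is correct and matches the paper's first step (via Proposition~\ref{propP1.12}), and the fine zonotopal tiling of $\Om$ into translated parallelepipeds $P_j+t_j$, each a fundamental domain of the lattice $\Lam_j$ generated by its edges, is a standard and true fact. The problem is that everything then rests on your ``combining principle'' --- that a finite, measure-disjoint union of translated fundamental domains of \emph{arbitrary, mutually incommensurable} lattices admits a Riesz basis of exponentials --- and this is precisely the open part, not a known result you can cite or a routine estimate you can defer. The mechanism of \cite{KN15} does not transfer: its combining lemma produces a Riesz basis for $S_2\setminus S_1$ from nested spectra $\Lam_1\sbt\Lam_2$ attached to nested sets $S_1\sbt S_2$, and the whole point of the incommensurable situation is that no such nested or common periodic structure exists. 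Your sketch (``adjoin a perturbed coset family and control the interaction through quantitative Bessel and lower-Riesz estimates'') names the difficulty but supplies no argument for why the union of the old spectrum with $E(\Lam_m^*+a)$ remains a Riesz basis after each step; in particular there is no reason the lower Riesz bound survives the adjunction, and no perturbation result in the paper's toolbox (Proposition~\ref{propP1.9} only perturbs an \emph{existing} Riesz basis for a \emph{fixed} domain) does this job. Note also that when the $\Lam_j$ do share a common refinement lattice, your statement reduces to the lattice multi-tiling case already covered by \cite{GL14} --- which is exactly the restricted setting the theorem is meant to go beyond.

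The paper takes a genuinely different route that sidesteps gluing altogether: it inducts on the number $n$ of generators (and on $d$), peeling off one segment $[-\half u_n,\half u_n]$ at a time. The key Lemma~\ref{lemC3.1} decomposes every $F\in PW(\Om_n)$ as $F(x,y)=G(x,y)+H(x,y)\sin(\pi y)$ with $G$ in the Paley--Wiener space of a \emph{cylindric} set $\Sig_n$ (handled by the $(d-1)$-dimensional inductive hypothesis and Lemma~\ref{lemC1.1}) and $H\in PW(\Om_{n-1})$ (handled by the inductive hypothesis on $n$). The factor $\sin(\pi y)$ decouples the two spectra: $\Gam_n\times\Z$ lies in its zero set, while the spectrum for $\Om_{n-1}$ is pushed off that zero set by a small perturbation justified by Paley--Wiener stability. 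This is where the paper absorbs the incommensurability that your approach leaves unresolved. To salvage your proposal you would need to actually prove the combining principle, which in this generality is not available.
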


The result is new in all dimensions $d$ greater than one.

In \cite{LR00}, Lyubarskii and Rashkovskii established the
existence of a Riesz basis of exponentials  for
convex, centrally symmetric polygons in $\R^2$ such that
all the vertices of the polygon lie on the integer lattice
$\Z^2$ (one may alternatively assume that the 
vertices lie on some other lattice, due to the
invariance of the problem under affine 
transformations).\footnote{The assumption that all the 
vertices of the polygon lie on a lattice was stated in 
\cite{LR00} in a different (equivalent) form, by imposing 
a system of arithmetic constraints given in \cite[Proposition 3.1(v)]{LR00}.}
The approach in \cite{LR00} involves methods from the 
theory of entire functions of two complex variables.
The paper \cite{LR00} contains also a weaker result for
convex, centrally symmetric polygons whose vertices
fail to lie on a lattice, but in this case the result does
not amount to the construction of a Riesz basis of exponentials.

A similar result in higher dimensions was obtained in
 \cite[Corollary 3]{GL14}, where the
existence of a Riesz basis of exponentials 
was established for  centrally 
symmetric polytopes in $\R^d$ with 
centrally symmetric facets, such that all the  vertices
of the polytope lie on the lattice $\Z^d$.
The proof is based on the fact that such a polytope
\emph{multi-tiles} the space by lattice translates
(in connection with this result, 
 see also \cite{Kol15}, \cite{GL18}).

In this paper, our goal is to prove the existence of a Riesz 
basis of exponentials for convex, centrally
symmetric polytopes with centrally symmetric faces,
\emph{without imposing any extra constraints}.
This is the content of our main result, \thmref{thmA1}.

\subsection{}
Our approach to the proof of \thmref{thmA1} is inspired by the paper
\cite{Wal17} due to Walnut. In that paper, the author
applies a technique outlined in \cite{BFW06} in order
to construct a system of exponentials $E(\Lam)$
that is shown to be \emph{complete} in the space $L^2(\Om)$,
where $\Om \subset \R^2$ is  a convex, 
centrally symmetric polygon. The set $\Lambda$ 
constructed in \cite{Wal17} is the union of a 
finite number of shifted lattices in $\R^2$. 
It is shown that if the convex polygon satisfies  certain 
extra arithmetic constraints given in \cite[Theorem 4.2]{Wal17}, 
then $E(\Lambda)$ is not only a 
complete system, but is in fact a Riesz basis, in $L^2(\Om)$.

In \cite{Wal17} the author does not provide any transparent 
description as to which convex, centrally symmetric polygons 
satisfy the extra constraints imposed in \cite[Theorem 4.2]{Wal17}. 
One can verify though
that these constraints are satisfied if and only if,
possibly after applying an affine transformation,
all the  vertices of the polygon lie in $\Z^2$.
Hence the class of planar convex polygons for which a 
Riesz basis $E(\Lam)$ is constructed in \cite{Wal17}
coincides  with the class covered by the result in \cite{LR00}.

In this paper we will extend the technique from
\cite{Wal17} to all dimensions, and also combine it with the 
Paley-Wiener theorem about the stability of Riesz bases under small perturbations (see \secref{sect:prelim}).
This will allow us to  construct a Riesz basis $E(\Lam)$ 
for any convex, centrally
symmetric polytope in $\R^d$ with centrally symmetric faces,
without imposing any extra arithmetic constraints.
The set of frequencies $\Lam$ in our construction 
will  no longer be a union of finitely many shifted lattices, 
but it will rather have a less regular structure.


\section{Preliminaries}
\label{sect:prelim}

\subsection{Riesz bases}
Let $H$ be a separable Hilbert space. 
A system of vectors $\{f_n\} \subset H$
is called a \emph{Riesz basis} if it is 
the image of an orthonormal basis 
under a bounded and invertible linear map.
If $\{f_n\}$ is a Riesz basis then any
$f \in H$ admits a unique expansion in a series 
$f = \sum c_n f_n$, and the coefficients
$\{c_n\}$ satisfy
$A \|f\|^2 \leq \sum |c_n|^2 \leq B\|f\|^2$
for some positive constants $A, B$ which
do not depend on $f$. In fact, the latter
condition for the system $\{f_n\}$
can serve as an  equivalent definition
of a Riesz basis.

There are also several other ways to characterize 
Riesz bases in a separable Hilbert space $H$. The following
characterization, see
\cite[Section 4.4, Theorem 8]{You01}, will be used
in the present paper:

\begin{prop}
\label{propP1.6}
A system $\{f_n\} \subset H$ is a Riesz basis if and only if
it satisfies the following three conditions:
\begin{enumerate-num}
\item \label{rb:i} $\{f_n\}$ is a complete system in $H$;
\item \label{rb:ii}  for every $f \in H$ we have
$\sum_n \left| \dotprod{f}{f_n} \right|^2 < \infty$;
\item \label{rb:iii} given any 
sequence of scalars $\{c_n\}$ such that
$\sum_n |c_n|^2 < \infty$,
there exists at least one $f \in H$ satisfying
$\dotprod{f}{f_n} = c_n$
for all $n$.
\end{enumerate-num}
\end{prop}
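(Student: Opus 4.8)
The plan is to prove both implications by working with the \emph{analysis operator} associated with the system $\{f_n\}$, namely the linear map $U \colon H \to \ell^2$ formally given by $Uf = \{\dotprod{f}{f_n}\}_n$. The whole proposition reduces to the assertion that conditions \ref{rb:i}--\ref{rb:iii} hold precisely when $U$ is a well-defined, bounded, and boundedly invertible map of $H$ onto $\ell^2$; once this is established, the Riesz basis property for $\{f_n\}$ falls out by passing to the adjoint $U^*$.

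For the forward direction, suppose $\{f_n\}$ is a Riesz basis, so that $f_n = T e_n$ for some orthonormal basis $\{e_n\}$ of $H$ and some bounded invertible operator $T$. Completeness \ref{rb:i} is immediate since $T$ has dense range. For \ref{rb:ii}, write $\dotprod{f}{f_n} = \dotprod{f}{Te_n} = \dotprod{T^*f}{e_n}$, so $\sum_n |\dotprod{f}{f_n}|^2 = \|T^*f\|^2 < \infty$ by Parseval. For \ref{rb:iii}, given $\{c_n\} \in \ell^2$ form $g = \sum_n c_n e_n \in H$ and set $f = (T^*)^{-1} g$; then $\dotprod{f}{f_n} = \dotprod{T^*f}{e_n} = \dotprod{g}{e_n} = c_n$. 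These steps are routine.

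For the converse, assume \ref{rb:i}--\ref{rb:iii}. Condition \ref{rb:ii} says that $U$ maps $H$ into $\ell^2$; to see that $U$ is \emph{bounded} I would invoke the closed graph theorem, checking that if $f_k \to f$ in $H$ and $Uf_k \to \{c_n\}$ in $\ell^2$, then comparing $n$-th coordinates (inner products are continuous, and $\ell^2$-convergence forces coordinatewise convergence) gives $c_n = \dotprod{f}{f_n}$, i.e. $\{c_n\} = Uf$. Injectivity of $U$ follows from \ref{rb:i}: if $Uf = 0$ then $f$ is orthogonal to every $f_n$, hence $f = 0$. Surjectivity of $U$ onto $\ell^2$ is exactly \ref{rb:iii} (the non-uniqueness there is harmless, since injectivity is supplied separately). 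Thus $U$ is a bounded bijection of $H$ onto $\ell^2$, and by the open mapping theorem $U^{-1}$ is bounded, so $U$ — and hence $U^* \colon \ell^2 \to H$ — is a bounded invertible operator. Finally, denoting by $\{e_n\}$ the standard orthonormal basis of $\ell^2$, one computes $\dotprod{U^* e_n}{f} = \dotprod{e_n}{Uf} = \overline{\dotprod{f}{f_n}} = \dotprod{f_n}{f}$ for all $f \in H$, whence $U^* e_n = f_n$. Therefore $\{f_n\}$ is the image of an orthonormal basis under a bounded invertible map, i.e. a Riesz basis (composing $U^*$ with a unitary identification $H \to \ell^2$ if one insists that the orthonormal basis live inside $H$).

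I expect the only genuine obstacle to lie in the functional-analytic core of the converse: one must notice that condition \ref{rb:ii} by itself does \emph{not} give boundedness of $U$, so the closed graph theorem has to be brought in, and then one must correctly combine \ref{rb:i} with \ref{rb:iii} to obtain bijectivity before the open mapping theorem applies. The bookkeeping with complex conjugates in the identity $U^* e_n = f_n$, and the harmless replacement of $\ell^2$ by an abstract separable Hilbert space, are minor points.
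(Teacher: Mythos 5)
Your proof is correct. Note that the paper does not prove this proposition at all --- it is quoted with a citation to Young's book \cite{You01} --- so there is no in-paper argument to compare against; your self-contained proof via the analysis operator $U f = \{\dotprod{f}{f_n}\}$ is the standard one, and you correctly identify and handle the two non-trivial points: boundedness of $U$ requires the closed graph theorem (condition \ref{rb:ii} alone only gives that $U$ is well defined), and bijectivity comes from combining \ref{rb:i} (injectivity) with \ref{rb:iii} (surjectivity) before the open mapping theorem yields bounded invertibility and hence $f_n = U^* e_n$.
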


For a discussion about the various properties and
characterizations of Riesz bases we refer the
reader to \cite{You01}.

\subsection{Paley-Wiener spaces}
Let $\Om \sbt \R^d$ be a bounded, measurable set of
positive measure. The Paley-Wiener space $PW(\Om)$
consists of all functions $F \in L^2(\R^d)$ which are
Fourier transforms of functions from $L^2(\Om)$, 
namely,
\begin{equation}
F(t) = \int_{\Om} f(x) \, e^{-2\pi i \dotprod{t}{x}}\, dx, \quad f \in L^2(\Om).
\end{equation}

A set  $\Lam \sbt \R^d$ is called
a \emph{set of uniqueness} for the space $PW(\Om)$
if whenever a function $F$ from the space satisfies
$F(\lam)=0$, $\lam \in \Lam$, then $F$ is identically
zero. This means that the functions from the
space $PW(\Om)$ are  uniquely determined by their
values on $\Lam$. 

We say that $\Lam$ is a \emph{set of interpolation} for  
$PW(\Om)$ if for any 
$\{c(\lam)\} \in \ell^2(\Lam)$ there exists
at least one 
$F \in PW(\Om)$ satisfying $F(\lam) = c(\lam)$,
$\lam \in \Lam$.
Such a function $F$ is said to solve the interpolation
problem with the set of nodes $\Lam$ and with the values
$\{c(\lam)\}$.

The Fourier transform is a unitary map from the space
 $L^2(\Om)$ onto $PW(\Om)$. This allows to reformulate
the uniqueness and interpolation properties of a set $\Lambda \sbt \R^d$ 
with respect to the space $PW(\Om)$, in terms
of properties  of the system of exponential functions $E(\Lam)$
in the space $L^2(\Om)$. Thus
$\Lam$ is a set of uniqueness for  $PW(\Om)$
if and only if  $E(\Lam)$ is a complete system  in 
$L^2(\Om)$; while $\Lam$ is a set of interpolation for  $PW(\Om)$
if and only if the system of equations
$\dotprod{f}{e_\lam} = c(\lam)$, $\lam \in \Lam$, admits at
least one solution $f \in L^2(\Om)$ whenever the scalars
$\{c(\lam)\}$ belong to $\ell^2(\Lam)$.

\subsection{Uniformly discrete sets}
A set $\Lambda\subset \R^d$ is said to be \emph{uniformly discrete}
 if there is $\delta>0$ such that $|\lambda'-\lambda|\ge \delta$ 
for any two distinct points $\lambda,\lambda'$ in $\Lambda$. 
The following  proposition may be found for instance
 in \cite[Proposition 2.7]{OU16}.

\begin{prop}
\label{propP1.2}
Let $\Om \subset \R^d$ be a bounded, measurable set of positive 
measure, and let $\Lam \sbt \R^d$ be a uniformly
discrete set. Then there is a constant $C = C(\Om,\Lam)$
such that 
\[
\sum_{\lam \in \Lam} |F(\lam)|^2 \leq C \|F\|^2_{L^2(\R^d)}
\]
for every function $F \in PW(\Om)$.
\end{prop}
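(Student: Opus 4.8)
The plan is to establish a pointwise estimate of Plancherel--Pólya type, controlling each value $F(\lam)$ by a local $L^2$ average of $F$ near $\lam$, and then to exploit the uniform discreteness of $\Lam$ as a bounded-overlap phenomenon that makes the resulting sum converge.

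Since $\Om$ is bounded, fix once and for all a function $\eta \in C_c^\infty(\R^d)$ with $\eta \equiv 1$ on $\Om$. Given $F \in PW(\Om)$, write $F = \ft f$ with $f \in L^2(\Om)$; as $\Om$ has finite measure we also have $f \in L^1(\R^d)$, so $F$ is bounded and continuous. Because $f$ is supported on $\Om$ and $\eta \equiv 1$ there, we have $\eta f = f$ a.e., and taking Fourier transforms gives
\[
F = \ft{\eta} * F,
\]
where $\ft{\eta} \in \mathcal{S}(\R^d)$ is a fixed Schwartz function depending only on $\Om$. In particular $\ft{\eta} \in L^1(\R^d)$, and for any $N$ there is a constant with $|\ft{\eta}(y)| \le C_N (1+|y|)^{-N}$.

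Now for each $\lam \in \Lam$ we have $F(\lam) = \int_{\R^d} \ft{\eta}(\lam - t)\, F(t)\, dt$, so by the Cauchy--Schwarz inequality
\[
|F(\lam)|^2 \le \|\ft{\eta}\|_{L^1(\R^d)} \int_{\R^d} |\ft{\eta}(\lam - t)|\, |F(t)|^2\, dt .
\]
Summing over $\lam \in \Lam$ and interchanging the sum and the integral (all terms are nonnegative) yields
\[
\sum_{\lam \in \Lam} |F(\lam)|^2 \le \|\ft{\eta}\|_{L^1(\R^d)} \int_{\R^d} |F(t)|^2 \Big( \sum_{\lam \in \Lam} |\ft{\eta}(\lam - t)| \Big) dt .
\]
Thus it suffices to bound $M := \sup_{t \in \R^d} \sum_{\lam \in \Lam} |\ft{\eta}(\lam - t)|$, since then the proposition follows with $C = M\,\|\ft{\eta}\|_{L^1(\R^d)}$, which depends only on $\Om$ and $\Lam$.

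The finiteness of $M$ is the heart of the matter and the only step requiring care; this is where uniform discreteness enters. If $\delta > 0$ is a separation constant for $\Lam$, then the balls $\{B(\lam,\delta/2)\}_{\lam\in\Lam}$ are pairwise disjoint, so a standard volume comparison gives $\#\{\lam\in\Lam : |\lam - t| \le r\} \le C_d (1 + r/\delta)^d$ for every $t$ and every $r > 0$. Partitioning $\Lam$ into the dyadic shells $\{2^k \delta \le |\lam - t| < 2^{k+1}\delta\}$, $k \ge 0$, together with the single ball $|\lam-t|<\delta$, and applying the decay estimate for $\ft{\eta}$ with a fixed $N > d$, one sums a geometrically convergent series and obtains a bound on $\sum_{\lam\in\Lam}|\ft{\eta}(\lam - t)|$ that is independent of $t$. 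Hence $M < \infty$, and the proof is complete.
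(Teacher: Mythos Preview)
Your argument is correct and is the standard Plancherel--P\'olya (Bessel) type proof: write $F=\ft{\eta}*F$ with $\eta\in C_c^\infty$ equal to $1$ on $\Om$, apply Cauchy--Schwarz, and use that the uniformly discrete set $\Lam$ has polynomial growth in balls to bound $\sup_t\sum_{\lam}|\ft{\eta}(\lam-t)|$ via dyadic shells. The paper does not give its own proof of this proposition; it simply cites \cite[Proposition~2.7]{OU16}. The argument there is of the same flavor, so there is nothing to compare---your proof is a perfectly acceptable substitute.
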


It is well-known that if 
$\Om \sbt \R^d$ is a bounded, measurable set of
positive measure, and if $\Lam \sbt \R^d$ is a 
set of interpolation for the space $PW(\Om)$,
 then $\Lam$ must be a uniformly
discrete set, see e.g.\ \cite[Section 4.2.1]{OU16}.
Due to Propositions \ref{propP1.6} and 
 \ref{propP1.2}, this implies the
following characterization of Riesz bases of
exponentials in the space $L^2(\Om)$:

\begin{prop}
\label{propP1.5}
Let $\Om \subset \R^d$ be a bounded, measurable set of positive 
measure, and let $\Lam \sbt \R^d$. The system
of exponentials $E(\Lam)$ is a Riesz basis
in $L^2(\Om)$ if and only if $\Lam$ is a set of
both uniqueness and interpolation for the space
$PW(\Om)$. 
\end{prop}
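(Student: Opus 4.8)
The plan is to transfer everything to the space $L^2(\Om)$ through the unitary Fourier transform and then apply the characterization of Riesz bases in \propref{propP1.6}. For $f \in L^2(\Om)$ write $F = \ft f \in PW(\Om)$; a direct computation gives $\dotprod{f}{e_\lam} = F(\lam)$ for every $\lam \in \R^d$. As already recorded in this section, this identity shows that condition \ref{rb:i} of \propref{propP1.6} (completeness of $E(\Lam)$ in $L^2(\Om)$) is equivalent to $\Lam$ being a set of uniqueness for $PW(\Om)$, and that condition \ref{rb:iii} (solvability of $\dotprod{f}{e_\lam} = c(\lam)$ for every datum $\{c(\lam)\} \in \ell^2(\Lam)$) is equivalent to $\Lam$ being a set of interpolation for $PW(\Om)$. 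So the proposition will follow once we see how condition \ref{rb:ii} enters.

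For the ``only if'' direction there is nothing further to do: if $E(\Lam)$ is a Riesz basis then \propref{propP1.6} gives all of \ref{rb:i}, \ref{rb:ii}, \ref{rb:iii}; in particular \ref{rb:i} and \ref{rb:iii} hold, and by the translation above $\Lam$ is a set of both uniqueness and interpolation for $PW(\Om)$.

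For the ``if'' direction, assume $\Lam$ is a set of uniqueness and of interpolation for $PW(\Om)$. Then conditions \ref{rb:i} and \ref{rb:iii} hold, and it remains only to verify \ref{rb:ii}, namely that $\sum_{\lam \in \Lam}|F(\lam)|^2 < \infty$ for every $F \in PW(\Om)$. Here I would invoke the known fact, recalled above, that an interpolation set for $PW(\Om)$ is necessarily uniformly discrete, and then apply \propref{propP1.2} to obtain the quantitative estimate $\sum_{\lam \in \Lam}|F(\lam)|^2 \le C\|F\|_{L^2(\R^d)}^2$, which in particular yields \ref{rb:ii}. An appeal to \propref{propP1.6} then shows that $E(\Lam)$ is a Riesz basis in $L^2(\Om)$. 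The argument is thus essentially a matter of assembling three facts stated in this section; the only subtlety worth flagging is that \ref{rb:ii} does \emph{not} follow from uniqueness and interpolation by purely soft reasoning, and that the uniform discreteness of interpolation sets (combined with the Bessel-type bound of \propref{propP1.2}) is precisely what supplies it. I do not anticipate any genuine obstacle beyond this observation.
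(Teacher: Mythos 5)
Your proposal is correct and follows exactly the route the paper indicates: translate conditions \ref{rb:i} and \ref{rb:iii} of \propref{propP1.6} into uniqueness and interpolation via the unitary Fourier transform, and obtain condition \ref{rb:ii} from the uniform discreteness of interpolation sets together with \propref{propP1.2}. No discrepancy to report.
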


\subsection{Stability}
We will need the following result which goes back to
Paley and Wiener \cite{PW34} about the stability of Riesz 
bases of exponentials under small perturbations:

\begin{prop}
\label{propP1.9}
Let $\Om \subset \R^d$ be a bounded, measurable set of positive 
measure, and let $\Lam = \{ \lam_n \}$ be a sequence of points
in $\R^d$ such that the system $E(\Lambda)$ is a Riesz basis
in $L^2(\Om)$. Then there is a constant $\eta = \eta(\Om,\Lam) >0$
such that if a sequence $\Lam' = \{\lam'_n\}$ satisfies
$|\lam'_n - \lam_n| \leq \eta$
for all $n$, then also $E(\Lam')$ is a Riesz basis in $L^2(\Om)$.
\end{prop}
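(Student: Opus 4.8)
The proof is the classical Paley--Wiener perturbation scheme, which I would carry out at the level of the synthesis operator. Write $\Lam = \{\lam_n\}$ and let $U\colon \ell^2 \to L^2(\Om)$ be the bounded linear map $U(\{c_n\}) = \sum_n c_n e_{\lam_n}$. Since $E(\Lam)$ is a Riesz basis, $U$ is invertible, and the Riesz basis inequality $A\|f\|^2 \le \sum_n|c_n|^2 \le B\|f\|^2$ (valid for $f = \sum_n c_n e_{\lam_n}$) yields $\|U\| \le A^{-1/2}$ and $\|U^{-1}\| \le \sqrt{B}$. The plan is to show that if $\eta$ is small enough then the map $U'(\{c_n\}) = \sum_n c_n e_{\lam'_n}$ is likewise bounded and invertible from $\ell^2$ onto $L^2(\Om)$; once this is done, $E(\Lam')$ is the image of the standard orthonormal basis of $\ell^2$ under $U'$, hence a Riesz basis in $L^2(\Om)$, which is the assertion.

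I would set $R := U - U'$, so that $R(\{c_n\}) = \sum_n c_n(e_{\lam_n} - e_{\lam'_n})$, and write $\delta_n := \lam'_n - \lam_n$ (so $|\delta_n| \le \eta$) and $\rho := \sup_{x \in \Om}|x| \in (0,\infty)$. For a finitely supported sequence $\{c_n\}$ and $x \in \Om$,
\[
\sum_n c_n\bigl(e_{\lam_n}(x) - e_{\lam'_n}(x)\bigr) = \sum_n c_n e_{\lam_n}(x)\bigl(1 - e^{2\pi i \dotprod{\delta_n}{x}}\bigr) = -\sum_{k=1}^{\infty}\frac{(2\pi i)^k}{k!}\sum_n c_n e_{\lam_n}(x)\,\dotprod{\delta_n}{x}^k ,
\]
the Taylor series converging absolutely and uniformly on the bounded set $\Om$. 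Expanding $\dotprod{\delta_n}{x}^k = \sum_{|\alpha|=k}\binom{k}{\alpha}\delta_n^\alpha x^\alpha$ by the multinomial theorem, interchanging the (finite) sum over $n$ with the sums over $k$ and $\alpha$, and using the triangle inequality in $L^2(\Om)$ together with $\|x^\alpha\|_{L^\infty(\Om)} \le \rho^{|\alpha|}$ and
\[
\Bigl\|\sum_n c_n\delta_n^\alpha e_{\lam_n}\Bigr\|_{L^2(\Om)} = \bigl\|U(\{c_n\delta_n^\alpha\})\bigr\| \le A^{-1/2}\,\eta^{|\alpha|}\Bigl(\sum_n|c_n|^2\Bigr)^{1/2},
\]
I obtain, since $\sum_{|\alpha|=k}\binom{k}{\alpha} = d^k$,
\[
\bigl\|R(\{c_n\})\bigr\|_{L^2(\Om)} \le A^{-1/2}\Bigl(\sum_n|c_n|^2\Bigr)^{1/2}\sum_{k=1}^{\infty}\frac{(2\pi d\rho\eta)^k}{k!} = A^{-1/2}\bigl(e^{2\pi d\rho\eta} - 1\bigr)\Bigl(\sum_n|c_n|^2\Bigr)^{1/2}.
\]
Hence $R$ extends to a bounded operator $\ell^2 \to L^2(\Om)$ with $\|R\| \le A^{-1/2}(e^{2\pi d\rho\eta} - 1)$, so that $U' = U - R$ is bounded and $\|U^{-1}R\| \le \sqrt{B/A}\,(e^{2\pi d\rho\eta} - 1)$.

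Finally I would choose $\eta = \eta(\Om,\Lam) > 0$ so small that $\sqrt{B/A}\,(e^{2\pi d\rho\eta} - 1) < 1$ — for instance any $\eta < (2\pi d\rho)^{-1}\log\bigl(1 + \sqrt{A/B}\,\bigr)$ will do. Then $U' = U(I - U^{-1}R)$ with $\|U^{-1}R\| < 1$, so $I - U^{-1}R$ is invertible on $\ell^2$ by a Neumann series, and therefore $U'$ is bounded and invertible, which completes the proof. I do not expect a genuine obstacle; the only point needing care is that the perturbation cannot be handled one exponential at a time, since $\|e_{\lam_n} - e_{\lam'_n}\|$ does not tend to $0$ as $n \to \infty$, and this is exactly what forces the use of the $\ell^2$-structure, supplied at each order of the Taylor expansion by the Riesz basis bounds of $E(\Lam)$. (In particular the uniform discreteness of $\Lam$ enters only through the assumption that $E(\Lam)$ is a Riesz basis, and requires no separate mention.)
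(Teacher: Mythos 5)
Your proof is correct. Note that the paper does not actually prove Proposition \ref{propP1.9} --- it only cites \cite[Section 4.3]{OU16} --- and your argument is precisely the classical Paley--Wiener perturbation scheme that the citation points to: the Taylor/multinomial expansion of $1-e^{2\pi i \dotprod{\delta_n}{x}}$, the bound $\|R\|\le A^{-1/2}(e^{2\pi d\rho\eta}-1)$ obtained by applying the upper Riesz bound of $E(\Lam)$ to the weighted sequences $\{c_n\delta_n^\alpha\}$, and the Neumann series inversion of $U'=U(I-U^{-1}R)$; all steps, including the interchange of sums for finitely supported $\{c_n\}$ and the choice $\eta<(2\pi d\rho)^{-1}\log(1+\sqrt{A/B})$, check out.
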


For a proof of this result the reader may consult
\cite[Section 4.3]{OU16}.

\subsection{Convex polytopes}
A set $\Om \sbt \R^d$ is called a \emph{convex polytope}
if $\Om$ is the convex hull of a finite number
of points. Equivalently,  a convex polytope is a bounded set 
which can be represented as the intersection
of finitely many closed halfspaces.

A convex polytope $\Om \subset \R^d$ is said to be
 \emph{centrally symmetric} if the set $-\Om$ is a translate
of $\Om$. In this case, there exists a unique point $x \in \R^d$ 
such that 
$- \Om + x = \Om - x$, and we  say that $\Om$ is  symmetric with respect to the point $x$.

A \emph{zonotope} in $\R^d$  is a set $\Om$
 which can be represented as the
Minkowski sum of a finite number of line segments, that is,
$\Om = S_1 + S_2 + \dots + S_n$ where each one of the
sets $S_1, S_2, \dots, S_n$ is a line segment in $\R^d$.
A zonotope in $\R^d$   can  be equivalently defined as
 the image of a cube in $\R^n$
under an affine map from $\R^n$ to $\R^d$.

 A zonotope is a convex, centrally symmetric polytope, 
and all its faces of all dimensions are also zonotopes. 
In particular, all  the faces of a zonotope are  centrally 
symmetric. The following proposition
shows that the  converse is also true:

\begin{prop}[{see \cite[Theorem 3.5.2]{Sch14}}]
\label{propP1.12}
Let $\Om$ be a convex polytope in $\R^d$.
Then the following conditions are equivalent:
\begin{enumerate-num}
\item \label{zo:i} $\Om$ is a zonotope 
(i.e.\ $\Om$ is the Minkowski
sum of  finitely many line segments);
\item \label{zo:ii}   $\Om$ is
centrally symmetric and all its faces 
of all dimensions are also centrally symmetric;
\item \label{zo:iii} all the two-dimensional
faces of $\Om$ are centrally symmetric.
\end{enumerate-num}
\end{prop}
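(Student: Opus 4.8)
The plan is to establish the cycle of implications $\ref{zo:i}\Rightarrow\ref{zo:ii}\Rightarrow\ref{zo:iii}\Rightarrow\ref{zo:i}$. For $\ref{zo:i}\Rightarrow\ref{zo:ii}$ I would use how faces behave under Minkowski addition. Writing $\Om=S_1+\dots+S_n$ with each $S_j$ a segment, for every direction $u\in\R^d\setminus\{0\}$ the face of $\Om$ on which $x\mapsto\dotprod{x}{u}$ is maximized equals $F_1(u)+\dots+F_n(u)$, where $F_j(u)$ is the corresponding face of $S_j$; and $F_j(u)$ is either all of $S_j$ (exactly when $S_j$ is parallel to the hyperplane $u^\perp$) or a single endpoint of $S_j$ otherwise. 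Hence every face of $\Om$ is, up to translation, again a Minkowski sum of segments, i.e.\ a zonotope, and in particular centrally symmetric; since every face of a convex polytope is obtained this way for a suitable $u$, all faces are centrally symmetric. Finally $\Om$ itself is centrally symmetric, as $-\Om=\sum_j(-S_j)$ and each $-S_j$ is a translate of $S_j$. The implication $\ref{zo:ii}\Rightarrow\ref{zo:iii}$ is immediate, the two-dimensional faces being among the faces of all dimensions.

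The content of the proposition is $\ref{zo:iii}\Rightarrow\ref{zo:i}$, which I would prove by induction on the dimension $d$. When $d\le2$ the only two-dimensional face is $\Om$ itself, and a centrally symmetric convex polygon is a zonotope: its edge vectors, read off in cyclic order, come in antipodal pairs $v_1,\dots,v_k$ and $-v_1,\dots,-v_k$, and then $\Om$ is a translate of $\sum_{i=1}^{k}[0,v_i]$ (the degenerate cases of a point or a segment being trivial). For $d\ge3$, each facet $F$ of $\Om$ is a $(d-1)$-polytope whose two-dimensional faces are two-dimensional faces of $\Om$, hence centrally symmetric; so by the inductive hypothesis every facet of $\Om$ is a zonotope. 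It remains to pass from ``all facets are zonotopes, compatibly'' to ``$\Om$ is a zonotope''.

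For this last step I would use a \emph{belt} (or \emph{zone}) argument. Fix an edge $e$ of $\Om$ with direction $u$; starting from a facet containing $e$ and repeatedly using that a centrally symmetric polytope contains, for each of its edges, a parallel opposite edge, one generates a cyclic sequence of edges of $\Om$ all parallel to $u$, together with the facets joining consecutive ones --- the belt in direction $u$. Comparing the zonotope decompositions of the successive facets along the belt, which are forced to agree on their common faces, one checks that the length contributed in direction $u$ is the same all along the belt; this allows one to attach to each edge direction $u_i$ of $\Om$ a well-defined segment $S_i\parallel u_i$. One then forms $Z:=S_1+\dots+S_n$ and verifies that $Z$ is a translate of $\Om$ --- for instance by checking that $\Om$ and $Z$ have the same support function, or inductively, by noting that the projection along $u_1$ carries $\Om$ onto a $(d-1)$-polytope still satisfying \ref{zo:iii} (hence a zonotope, by the induction on $d$), while the belt structure exhibits $\Om$ as a translate of $S_1$ plus the lift of this projection. (Alternatively, one may first deduce \ref{zo:ii} from \ref{zo:iii}, using the separate fact that a convex polytope of dimension at least $3$ all of whose facets are centrally symmetric is itself centrally symmetric, and then run the belt construction starting from \ref{zo:ii}.) I expect the main obstacle to lie precisely in the bookkeeping of this belt argument: showing that the belt closes up on itself, that the width in direction $u$ is globally well-defined and consistent across all the facets it meets, and that the Minkowski sum $Z$ reproduces $\Om$ exactly rather than merely up to combinatorial equivalence.
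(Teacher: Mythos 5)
The paper offers no proof of this proposition: it is quoted directly from Schneider \cite[Theorem 3.5.2]{Sch14}, so there is no internal argument to compare yours against. Judged on its own terms, your treatment of \ref{zo:i}$\Rightarrow$\ref{zo:ii} (via the identity for faces of a Minkowski sum, $F(K+L,u)=F(K,u)+F(L,u)$, together with the observation that $-\Om$ is a translate of $\Om$) and of \ref{zo:ii}$\Rightarrow$\ref{zo:iii} is correct and complete, as is the planar base case of \ref{zo:iii}$\Rightarrow$\ref{zo:i}.

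The gap is in the inductive step of \ref{zo:iii}$\Rightarrow$\ref{zo:i}, which is the entire content of the proposition and a nontrivial classical result. Your induction on $d$ reduces matters to: every facet of $\Om$ is a zonotope, hence $\Om$ is a zonotope; but the belt argument you outline for this last step is a program rather than a proof, and you say so yourself. The points you defer are exactly the hard ones: (a) that the edges of $\Om$ parallel to a fixed direction $u$, together with the $2$-faces joining consecutive ones, form a single closed belt reaching \emph{every} edge parallel to $u$ (a connectivity statement that needs its own argument); (b) that central symmetry of each $2$-face in the belt forces all these edges to have a common length, so that a segment $S_u$ is well defined; and (c) that $\Om$ actually decomposes as $S_u+\Om'$ with $\Om'$ a polytope having strictly fewer edge directions and still satisfying \ref{zo:iii}, so that a second induction (on the number of edge directions, not merely on $d$) can close. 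Without (c), forming $Z=S_1+\cdots+S_n$ and asserting that it is a translate of $\Om$ is unsupported --- the support-function comparison you gesture at is not automatic, and the alternative route via the projection along $u_1$ does not obviously preserve hypothesis \ref{zo:iii}. As it stands the proposal establishes only the easy implications; for the purposes of this paper the right move is the one the authors make, namely to cite \cite[Theorem 3.5.2]{Sch14}.
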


Thus, for example, any convex
centrally symmetric polygon $\Om \subset \R^2$
 is a zonotope.


\section{Cylindric sets}
\label{sect:cylindric}

\subsection{}
We denote a point in $\R^d = \R^{d-1} \times \R$ as $(x,y)$
where $x \in \R^{d-1}$ and $y \in \R$.

\begin{definition}
\label{defC1.1}
A bounded, measurable set $\Sig \subset \R^d$ will be called a \emph{cylindric set} if there exists a bounded, measurable set 
$\Pi \sbt \R^{d-1}$ and a bounded, measurable function 
$\varphi: \Pi \to \R$ such that
\begin{equation}
	\label{eqC1.1}
	\Sig = \left\{(x,y) : x \in \Pi, \; \varphi(x)  \le y \le \varphi(x) + 1 \right\}.
\end{equation}
\end{definition}

In other words, $\Sig$ is a cylindric set if $\Sig \subset \Pi \times \R$
and  for every $x \in \Pi$, the set
$\{y : (x,y) \in \Sig\}$ is an interval of length exactly $1$,
where the position of this interval is allowed to depend on $x$.
The set $\Pi$ will be called the \emph{base} of the
cylindric set $\Sig$. 

In the special case when the function $\varphi$ in \eqref{eqC1.1}
 is constant, the cylindric set $\Sig$ has a cartesian product 
structure, namely $\Sig = \Pi \times I$ where $I$ is an interval 
of length 1. In this case it is well-known 
that if there is a set $\Gam \subset \R^{d-1}$ such that 
the system $E(\Gam)$ is a Riesz basis in $L^2(\Pi)$, then 
the system
$E(\Gam \times \Z)$ is a Riesz basis in $L^2(\Sig)$.

We will use the fact that the latter assertion remains valid
for arbitrary cylindric sets of the form \eqref{eqC1.1},
 not only for those with
 a cartesian product  structure:

\begin{lem}
\label{lemC1.1}
Let $\Sig \sbt \R^d$ be a cylindric set with base $\Pi$, and 
suppose that there is a set
$\Gam \subset \R^{d-1}$ such that the system of exponentials
$E(\Gam)$ is a Riesz basis in $L^2(\Pi)$. Then the system
$E(\Gam \times \Z)$ is a Riesz basis in $L^2(\Sig)$.
\end{lem}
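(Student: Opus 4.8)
The plan is to reduce the general cylindric set to the product case by a unitary ``unshearing'' of the $y$-variable. Concretely, for a cylindric set $\Sig$ with base $\Pi$ and profile function $\varphi$, define the map $U : L^2(\Sig) \to L^2(\Pi \times [0,1])$ by $(Uf)(x,y) = f(x, y + \varphi(x))$. For each fixed $x \in \Pi$ this is simply a translation in the $y$-variable sending the interval $[\varphi(x), \varphi(x)+1]$ onto $[0,1]$, so $U$ is a well-defined unitary operator; its inverse is $(U^{-1}g)(x,y) = g(x, y - \varphi(x))$. Since $\Pi \times [0,1]$ has a cartesian product structure and $E(\Gam)$ is a Riesz basis in $L^2(\Pi)$ while $\{e^{2\pi i n y}\}_{n \in \Z}$ is an orthonormal basis in $L^2([0,1])$, the tensor products $E(\Gam \times \Z) = \{e_\gam(x) e^{2\pi i n y}\}$ form a Riesz basis in $L^2(\Pi \times [0,1])$ (this is the ``well-known'' product fact stated just before the lemma).

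Now I would apply $U^{-1}$ to this Riesz basis. A unitary map carries a Riesz basis to a Riesz basis, so $\{U^{-1}(e_\gam \otimes e_n)\}$ is a Riesz basis in $L^2(\Sig)$. The point is to identify these images explicitly:
\[
\bigl(U^{-1}(e_\gam \otimes e_n)\bigr)(x,y) = e^{2\pi i \dotprod{\gam}{x}} e^{2\pi i n (y - \varphi(x))} = e^{2\pi i \dotprod{\gam}{x}} e^{-2\pi i n \varphi(x)} e^{2\pi i n y}.
\]
This is \emph{not} the pure exponential $e_{(\gam,n)}$ on $\Sig$, because of the unimodular factor $e^{-2\pi i n \varphi(x)}$ that depends on $x$. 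So the naive approach does not immediately give that $E(\Gam \times \Z)$ itself is a Riesz basis; it gives that a ``twisted'' system is. I expect this to be the main obstacle, and overcoming it is where one must be a little careful.

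To bridge the gap I would use an averaging / Fubini argument directly on the Gram-type quantities, rather than the unitary trick, or else combine the two. The cleanest route: verify the conditions of \propref{propP1.6} (or equivalently the uniqueness-plus-interpolation criterion of \propref{propP1.5}) for $E(\Gam \times \Z)$ in $L^2(\Sig)$ by slicing. For $f \in L^2(\Sig)$ and $(\gam,n) \in \Gam \times \Z$, compute
\[
\dotprod{f}{e_{(\gam,n)}}_{L^2(\Sig)} = \int_\Pi e^{2\pi i n \varphi(x)} \Bigl( \int_{\varphi(x)}^{\varphi(x)+1} f(x,y) e^{-2\pi i n y}\, dy \Bigr) e^{-2\pi i \dotprod{\gam}{x}}\, dx = \dotprod{g_n}{e_\gam}_{L^2(\Pi)},
\]
where $g_n(x) := e^{2\pi i n \varphi(x)} \int_{\varphi(x)}^{\varphi(x)+1} f(x,y) e^{-2\pi i n y}\, dy$ is, for each fixed $x$, the $n$-th Fourier coefficient of the $1$-periodic extension of $y \mapsto f(x, y)$ on the interval $[\varphi(x),\varphi(x)+1]$, twisted by the phase. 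By Parseval in the $y$-variable for each $x$ and then integrating over $\Pi$, one gets $\sum_{n} \int_\Pi |g_n(x)|^2\, dx = \|f\|_{L^2(\Sig)}^2$. Feeding this into the three conditions of \propref{propP1.6}: condition \ref{rb:ii} follows since $\sum_{n}\sum_{\gam} |\dotprod{f}{e_{(\gam,n)}}|^2 = \sum_n \sum_\gam |\dotprod{g_n}{e_\gam}_{L^2(\Pi)}|^2 \le B \sum_n \|g_n\|^2 = B\|f\|^2$ using property \ref{rb:ii} for $E(\Gam)$; completeness \ref{rb:i} follows because $\dotprod{f}{e_{(\gam,n)}} = 0$ for all $\gam$ forces $g_n = 0$ in $L^2(\Pi)$ for every $n$ (completeness of $E(\Gam)$), hence all the $y$-Fourier coefficients of each slice vanish, hence $f = 0$; and the interpolation condition \ref{rb:iii} is obtained by solving slice-by-slice — given $\{c(\gam,n)\} \in \ell^2$, for each $n$ use interpolation for $E(\Gam)$ to find $g_n \in L^2(\Pi)$ with $\dotprod{g_n}{e_\gam} = c(\gam,n)$ and $\|g_n\|^2 \le A^{-1}\sum_\gam |c(\gam,n)|^2$, then reconstruct $f$ on each slice from the Fourier coefficients $e^{-2\pi i n \varphi(x)} g_n(x)$, with $\|f\|^2 = \sum_n \|g_n\|^2 < \infty$. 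This establishes all three conditions, so $E(\Gam \times \Z)$ is a Riesz basis in $L^2(\Sig)$. The only genuinely nontrivial point is the bookkeeping of the phase factor $e^{2\pi i n \varphi(x)}$ and making sure the $\ell^2$-in-$n$ summations and the $L^2(\Pi)$-norm estimates interchange properly, which is justified by Tonelli.
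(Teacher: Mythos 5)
Your overall strategy --- slice $\Sig$ over the base $\Pi$, use that $E(\Z)$ is an orthonormal basis on each length-one fibre $[\varphi(x),\varphi(x)+1]$ regardless of its position, and then apply the Riesz basis property of $E(\Gam)$ in $L^2(\Pi)$ to the resulting sequence of coefficient functions --- is exactly the paper's proof; the paper verifies the two inequalities of \propref{propAP1.1} while you verify the three conditions of \propref{propP1.6}, which amounts to the same slicing computation.

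There is, however, a concrete error in the execution: the phase factor $e^{2\pi i n\varphi(x)}$ does not belong in your formula for $\dotprod{f}{e_{(\gam,n)}}$. Writing out the inner product by Fubini gives
\[
\dotprod{f}{e_{(\gam,n)}}_{L^2(\Sig)}=\int_\Pi \Bigl(\int_{\varphi(x)}^{\varphi(x)+1} f(x,y)\,e^{-2\pi i n y}\,dy\Bigr)e^{-2\pi i \dotprod{\gam}{x}}\,dx ,
\]
with no twist; the factor $e^{2\pi i n\varphi(x)}$ arises only if one first unshears to $\Pi\times[0,1]$, i.e.\ it is the $n$-th Fourier coefficient of $y\mapsto f(x,y+\varphi(x))$, which is what pairs against the \emph{twisted} exponentials, not against $e_{(\gam,n)}$ itself. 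With your $g_n$ the identity $\dotprod{f}{e_{(\gam,n)}}=\dotprod{g_n}{e_\gam}$ is false, and in the interpolation step the function you reconstruct from the slice coefficients $e^{-2\pi i n\varphi(x)}g_n(x)$ satisfies $\dotprod{f}{e_{(\gam,n)}}=\dotprod{e^{-2\pi i n\varphi(\cdot)}g_n}{e_\gam}$ rather than $c(\gam,n)$, so it does not solve the interpolation problem as written. The repair is simply to delete the phase everywhere and work with $h_n(x):=\int_{\varphi(x)}^{\varphi(x)+1}f(x,y)\,e^{-2\pi i n y}\,dy$ (the paper's $\phi_n$); since $|h_n(x)|=|g_n(x)|$ pointwise, all your Parseval identities and norm estimates survive unchanged, and the corrected argument then coincides with the paper's.
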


The proof of this lemma for arbitrary cylindric sets
is  similar to the one for  cartesian products, which
should be well-known.
We were not able to find the proof in the literature though,
 so we include it below  for completeness.

\subsection{}
For the proof of \lemref{lemC1.1} we will use the following
characterization of the exponential systems $E(\Lam)$ that
form a Riesz basis in the space $L^2(\Om)$.

\begin{prop}
\label{propAP1.1}
Let $\Om \subset \R^d$ be a bounded, measurable set of positive 
measure, and let $\Lam \sbt \R^d$. The system
of exponentials $E(\Lam)$ is a Riesz basis
in $L^2(\Om)$ if and only if there exists a constant $M$ such that the
following two conditions hold:

\begin{enumerate-num}
\item \label{AP1.1.1} 
$\sum_\lam |c(\lam)|^2 \leq M \| \sum_{\lambda} c(\lambda) e_\lam \|^2_{L^2(\Om)}$
whenever $\{c(\lam)\}$, $\lam \in \Lam$, is a sequence of scalars with only
finitely many nonzero elements;
\item \label{AP1.1.2}
$\|f\|^2_{L^2(\Om)} \leq M \sum_{\lam} | \dotprod{f}{e_\lam}|^2$
for any $f \in L^2(\Om)$.
\end{enumerate-num}
\end{prop}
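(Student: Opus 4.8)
The plan is to verify directly the two-sided frame-type inequalities in Proposition \ref{propAP1.1}, since these are essentially a restatement of the equivalent definition of a Riesz basis recalled at the start of \secref{sect:prelim}: a system $\{f_n\}$ is a Riesz basis iff it is complete and there are constants $0 < A \le B < \infty$ with
\[
A \sum_n |c_n|^2 \le \Big\| \sum_n c_n f_n \Big\|^2 \le B \sum_n |c_n|^2
\]
for all finitely supported $\{c_n\}$. So first I would show that conditions \ref{AP1.1.1} and \ref{AP1.1.2}, taken together, are equivalent to the conjunction of the completeness of $E(\Lam)$ and the existence of such $A, B$.

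The first main step is to produce the upper bound $\| \sum_\lam c(\lam) e_\lam \|^2 \le B \sum_\lam |c(\lam)|^2$ as a \emph{free consequence} of condition \ref{AP1.1.2}: applying \ref{AP1.1.2} with $f = \sum_\mu c(\mu) e_\mu$ (a finite sum, so certainly in $L^2(\Om)$) gives $\|f\|^2 \le M \sum_\lam |\dotprod{f}{e_\lam}|^2$, and one then recognizes $\dotprod{f}{e_\lam}$ as a finite linear combination of the numbers $\dotprod{e_\mu}{e_\lam}$; combining this with \ref{AP1.1.1} applied to the dual side, or more cleanly via a duality/adjoint argument, yields the Bessel-type bound with $B = M^2$ (up to constants). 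Dually, condition \ref{AP1.1.1} is exactly the lower frame inequality $A \sum |c(\lam)|^2 \le \|\sum c(\lam) e_\lam\|^2$ with $A = 1/M$. The second main step is to extract completeness: condition \ref{AP1.1.2} forces the analysis map $f \mapsto \{\dotprod{f}{e_\lam}\}$ to be bounded below, hence injective with closed range, and combined with the synthesis bound this makes $E(\Lam)$ a Riesz basis for the closed span it generates; one then argues that \ref{AP1.1.2} with the lower bound on $\|f\|$ in terms of the coefficients actually forces this span to be all of $L^2(\Om)$ — indeed if $f \perp e_\lam$ for all $\lam$ then \ref{AP1.1.2} gives $\|f\|^2 \le 0$, so $f = 0$, which is precisely completeness. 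For the converse direction, assuming $E(\Lam)$ is a Riesz basis one invokes Proposition \ref{propP1.6} (or directly the $A\|f\|^2 \le \sum |c_n|^2 \le B\|f\|^2$ characterization): the synthesis inequality gives \ref{AP1.1.1}, while the frame inequality applied to the biorthogonal expansion gives \ref{AP1.1.2}.

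I expect the cleanest route is actually to phrase the whole equivalence abstractly: in a separable Hilbert space $H$, a system $\{f_n\}$ is a Riesz basis iff there is $M$ with $\sum |c_n|^2 \le M \|\sum c_n f_n\|^2$ for all finite $\{c_n\}$ \emph{and} $\|f\|^2 \le M \sum |\dotprod{f}{f_n}|^2$ for all $f \in H$, then specialize to $H = L^2(\Om)$, $f_n = e_\lam$. The abstract statement follows by combining the lower Riesz-basis bound (which is \ref{AP1.1.1}), the observation that \ref{AP1.1.1} self-dualizes to give the upper Bessel bound via the standard adjoint argument, and the fact that \ref{AP1.1.2} simultaneously encodes completeness (no nonzero $f$ is orthogonal to every $f_n$) and the lower frame bound. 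The main obstacle, such as it is, is purely bookkeeping: making precise that \ref{AP1.1.1} \emph{alone} already implies the upper frame/Bessel bound (this is the one not-entirely-obvious implication — it uses that the synthesis operator, being bounded below, has closed range and bounded pseudo-inverse, so its adjoint, the analysis operator, is bounded), and then checking that the two conditions together give \emph{exactly} the Riesz-basis constants with no gap. Everything else is a direct translation of the definitions recalled in Proposition \ref{propP1.6} and the surrounding text.
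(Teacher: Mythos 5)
Your overall plan (reduce the statement to the two-sided Riesz inequalities plus completeness) is reasonable, but the pivotal step is wrong as stated. You claim that condition \ref{AP1.1.1} alone yields the upper Bessel bound $\|\sum c(\lam)e_\lam\|^2 \le B\sum|c(\lam)|^2$ ``via the standard adjoint argument'', because a synthesis operator that is bounded below ``has closed range and bounded pseudo-inverse, so its adjoint, the analysis operator, is bounded''. This is circular: one cannot speak of the adjoint, or of the closed range, of the synthesis map before knowing that it is bounded, and its boundedness is exactly what is at stake. Indeed, the abstract equivalence you propose is false: in $H=\ell^2$ with orthonormal basis $\{e_n\}$, the system $f_n=n\,e_n$ satisfies both $\sum_n|c_n|^2\le\|\sum_n c_nf_n\|^2$ and $\|f\|^2\le\sum_n|\dotprod{f}{f_n}|^2$ with $M=1$, yet it is not a Riesz basis, since there is no upper Riesz bound. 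Likewise, your first-paragraph derivation of the Bessel bound by applying \ref{AP1.1.2} to $f=\sum_\mu c(\mu)e_\mu$ presupposes control of the infinite sum $\sum_\lam|\dotprod{f}{e_\lam}|^2$ from above, which is again the Bessel bound you are trying to establish.

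The proposition is nevertheless true, for a reason specific to exponentials on a bounded set: condition \ref{AP1.1.1} forces $\Lam$ to be uniformly discrete (apply it to the sequence supported on two points $\lam,\lam'$ with values $1,-1$ and note that $\|e_\lam-e_{\lam'}\|_{L^2(\Om)}\to 0$ as $\lam'\to\lam$), and then \propref{propP1.2} --- which crucially uses the boundedness of $\Om$ --- supplies the missing Bessel inequality $\sum_\lam|\dotprod{f}{e_\lam}|^2\le C\|f\|^2$. With that in hand your bookkeeping does go through: \ref{AP1.1.1} is the lower Riesz bound (equivalently, $\Lam$ is a set of interpolation), \ref{AP1.1.2} is the lower frame bound (and in particular gives completeness, as you observe), and the Bessel bound closes the loop. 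This is in effect how the paper handles the statement: it does not prove it from scratch but reduces it, via \propref{propP1.5} and \propref{propP1.2}, to the identification of \ref{AP1.1.1} with the interpolation property and of \ref{AP1.1.2} with stable sampling, citing \cite{OU16}. You should either follow that route or at least insert the uniform-discreteness step explicitly; the purely abstract Hilbert-space version of your argument cannot be repaired.
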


We note that condition \ref{AP1.1.1} holds if and only if $\Lam$
is a set of interpolation for the space $PW(\Om)$, 
see \cite[Sections 4.1, 4.2]{OU16}. It follows from
\ref{AP1.1.1} that $\Lam$ must be  a uniformly
discrete set.

If the condition \ref{AP1.1.2} holds, then $\Lam$ is said to be a
\emph{set of stable sampling} for the space $PW(\Om)$, see
\cite[Section 2.5]{OU16}. This condition  implies
in particular that $\Lam$ is a  set of uniqueness  for the space.

\subsection{}
\emph{Proof of \lemref{lemC1.1}}.
We suppose that $\Sig \sbt \R^d$ is a cylindric set of the form
\eqref{eqC1.1},  and that $\Gam \subset \R^{d-1}$ is a set
such that the system of exponentials
$E(\Gam)$ is a Riesz basis in $L^2(\Pi)$. We must prove that
the system $E(\Gam \times \Z)$ is a Riesz basis in $L^2(\Sig)$.
We will show that the two conditions \ref{AP1.1.1} and
\ref{AP1.1.2} in \propref{propAP1.1} are satisfied.

We will use $\gamma$ to denote an element of $\Gam$, 
and $n$ to denote an element of $\Z$.

First we check that \ref{AP1.1.1} holds. Let $\{c(\gam,n)\}$ be 
a sequence of scalars with only finitely many nonzero elements.
We must show that $\sum_{\gam,n}
 |c(\gam,n)|^2 \leq M \|f\|^2_{L^2(\Sig)}$, where
\[
f(x,y) := \sum_{\gam,n} c(\gam,n) e_\gam(x) e_n(y).
\]
Let $\psi_n(x) := \sum_{\gam} c(\gam,n) e_\gam(x)$, 
and $I(x) := [\varphi(x), \varphi(x)+1]$. Then we have
\[
\label{AP1.7}
\int_{I(x)} |f(x,y)|^2 \, dy = \int_{I(x)}
\Big| \sum_{n} \psi_n(x) e_n(y) \Big|^2 dy 
=  \sum_{n} |\psi_n(x) |^2
\]
since $E(\Z)$ is an orthonormal basis in $L^2(I(x))$
for every $x$. In turn, this implies
\[
\|f\|^2_{L^2(\Sig)} = \int_{\Pi} \int_{I(x)}
|f(x,y)|^2 dy \, dx = \sum_{n} \| \psi_n \|^2_{L^2(\Pi)}
\geq \frac1{M} \sum_{n}  \sum_{\gam} |c(\gam,n)|^2,
\]
where the last inequality holds for a certain constant
$M = M(\Pi, \Gam)$ since the system 
$E(\Gam)$ is a Riesz basis in $L^2(\Pi)$. This
confirms that condition \ref{AP1.1.1} is indeed satisfied.

Next we check that \ref{AP1.1.2} holds. Let $f \in L^2(\Sig)$, then we have
\[
\|f\|^2_{L^2(\Sig)} = \int_{\Pi} \int_{I(x)} | f(x,y)|^2  \, dy \, dx
= \int_{\Pi} \Big( \sum_n |\phi_n(x)|^2 \Big) dx,
\]
where
\[
\phi_n(x) := \int_{I(x)} f(x,y) \, \overline{e_n(y)} \, dy.
\]
This is due to the fact that $E(\Z)$ is an orthonormal basis in $L^2(I(x))$. 
It follows that
\begin{equation}
\label{AP1.9}
\|f\|^2_{L^2(\Sig)}  = \sum_n \|\phi_n\|^2_{L^2(\Pi)}  \leq 
M \sum_{n} \sum_{\gam} | \dotprod{\phi_n}{e_{\gam}}|^2
= M\sum_{n} \sum_{\gam} | \dotprod{f}{e_{\gam,n}}|^2,
\end{equation}
where we denote $e_{\gam,n}(x,y) := e_\gam(x) e_n(y)$.
Observe that the inequality in \eqref{AP1.9} holds since 
$E(\Gam)$ is a Riesz basis in $L^2(\Pi)$.
This establishes \ref{AP1.1.2} and so the lemma is proved.
\qed


\section{Decomposition of functions with a zonotope spectrum}
\label{sect:decompose}

\subsection{}
Suppose that we are given $n$ vectors 
 $u_1, u_2, \dots, u_n$ in $\R^d$. 
The origin-symmetric zonotope generated by
these vectors is the set
\begin{equation}
	\label{eqC3.1.1}
	\Om_n = \Big\{ \sum_{j=1}^{n} t_j u_j \, : \, t_1,t_2,\dots,t_n \in [-\half, \half] \Big\}.
\end{equation}
This set is the Minkowski sum of the line segments
$[-\half u_j, \half u_j]$, $1 \leq j \leq n$,
and so it is indeed a zonotope in $\R^d$.

We observe that if the linear span of the 
vectors $u_1, u_2, \dots, u_n$ is the whole $\R^d$,
then the zonotope $\Om_n$ has nonempty interior; while
if these vectors do not span the whole $\R^d$
then $\Om_n$ is contained in some hyperplane,
and it is then a set of measure zero.

\begin{lem}
\label{lemC3.1}
Assume that the first $n-1$ vectors $u_1, u_2, \dots, u_{n-1}$
span the whole $\R^d$,  and that we have $u_n = (0,0,\dots,0,1)$. 
Then there is a
cylindric set $\Sig_n \sbt \R^d$  whose base $\Pi_n$
 is a zonotope in $\R^{d-1}$,  such that the following holds:

\begin{enumerate-num}
\item \label{dc:i}
 Any function $F \in PW(\Om_n)$ 
can be represented in the form
\begin{equation}
	\label{eqC3.1.2}
	F(x,y) = G(x,y) +  H(x,y) \sin (\pi y), 
	\quad (x,y) \in \R^{d-1} \times \R,
\end{equation}
for some $G \in PW(\Sig_n)$ and some $H \in PW(\Om_{n-1})$.
Here we denote by $\Om_{n-1}$ the origin-symmetric 
zonotope in $\R^d$ generated by the vectors
$u_1, u_2, \dots, u_{n-1}$.
\item \label{dc:ii}
 Conversely, for any two functions
$G \in PW(\Sig_n)$ and  $H \in PW(\Om_{n-1})$,
the function $F$ defined by \eqref{eqC3.1.2}
belongs to the space $PW(\Om_n)$.
\end{enumerate-num}
\end{lem}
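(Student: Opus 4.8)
The plan is to work on the Fourier side. A function $F \in PW(\Om_n)$ is the Fourier transform of some $f \in L^2(\Om_n)$, so $F$ extends to an entire function on $\C^d$ of exponential type governed by $\Om_n$ (Paley--Wiener). The decomposition $\Om_n = \Om_{n-1} + [-\half u_n, \half u_n]$ with $u_n = (0,\dots,0,1)$ means that, fixing the first $d-1$ coordinates, the set $\Om_n$ is a ``vertical thickening'' of $\Om_{n-1}$ by a segment of length $1$ in the $y$-direction; equivalently, $\1_{\Om_n}$ is, up to the obstruction described below, a convolution of $\1_{\Om_{n-1}}$ with $\1_{[-\half,\half]}$ in the last variable. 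The key elementary identity is that $\widehat{\1}_{[-\half,\half]}(y) = \operatorname{sinc}(\pi y)$, whose only real zeros are the nonzero integers, and that $\sin(\pi y)$ has precisely the zeros $\Z$. So the idea is: given $F \in PW(\Om_n)$, restrict attention to the function $y \mapsto F(x,y)$ for fixed $x$, use the structure of $\Om_n$ to peel off the factor $\sin(\pi y)$.

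Concretely, I would first identify $\Pi_n$ and $\Sig_n$. Since $u_1,\dots,u_{n-1}$ span $\R^d$, the projection $\pi: \R^d \to \R^{d-1}$ onto the first $d-1$ coordinates maps $\Om_{n-1}$ onto a zonotope $\Pi_n \subset \R^{d-1}$ (the Minkowski sum of the projected segments $[-\half \pi u_j, \half \pi u_j]$), which has nonempty interior in $\R^{d-1}$. For each $x \in \Pi_n$ the vertical slice $\{y : (x,y)\in \Om_{n-1}\}$ is a single point or an interval; write $\varphi(x)$ for its right endpoint (one must check $\varphi$ is bounded and measurable — it is a concave, piecewise-linear function of $x$ on $\Pi_n$). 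Then set
\[
\Sig_n = \{(x,y) : x \in \Pi_n, \ \varphi(x) \le y \le \varphi(x) + 1\},
\]
which is by construction a cylindric set with zonotope base $\Pi_n$. The geometric content I need is the slicewise identity: for a.e.\ $x$, the vertical slice of $\Om_n$ equals the vertical slice of $\Om_{n-1}$ plus $[-\half,\half]$, which follows from $\Om_n = \Om_{n-1} + [-\half u_n,\half u_n]$ and the fact that $u_n$ is purely vertical.

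For part \ref{dc:ii} (the easier direction): given $G \in PW(\Sig_n)$ and $H \in PW(\Om_{n-1})$, I want to show $F = G + H(x,y)\sin(\pi y) \in PW(\Om_n)$. The term $G$ lies in $PW(\Sig_n)$; I must check $\Sig_n \subset \Om_n$ (true since $\Sig_n$ is a vertical length-$1$ slab sitting inside the vertical thickening $\Om_n$ of $\Om_{n-1}$), so $G \in PW(\Om_n)$. For the second term, $\sin(\pi y) = \tfrac{1}{2i}(e^{i\pi y} - e^{-i\pi y})$, so multiplication by $\sin(\pi y)$ corresponds on the spatial side to taking $\tfrac{1}{2i}$ times the difference of the translates of $\hat H^{-1}$ (the function in $L^2(\Om_{n-1})$ whose transform is $H$) by $\pm\tfrac12$ in the last coordinate; these translates are supported in $\Om_{n-1} \pm \tfrac12 u_n \subset \Om_n$. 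Hence $H(x,y)\sin(\pi y) \in PW(\Om_n)$, and so is the sum. This direction is essentially bookkeeping once the set inclusions $\Sig_n \subset \Om_n$ and $\Om_{n-1} \pm \tfrac12 u_n \subset \Om_n$ are established.

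Part \ref{dc:i} is the main obstacle. Given $F \in PW(\Om_n)$, I want to produce the decomposition. The natural attempt is: the ``$\sin(\pi y)$ part'' should capture the portion of the spatial support $f = \hat F^{-1} \in L^2(\Om_n)$ that lies outside the slab $\Sig_n$, i.e.\ in $\Om_n \setminus \Sig_n = (\Om_{n-1} + [-\half u_n, \half u_n]) \setminus \Sig_n$, which splits (slicewise) into a ``bottom piece'' and there is nothing above since $\Sig_n$ is anchored at the top $\varphi(x)+1$ of the slice of $\Om_n$ — actually I should choose $\Sig_n$ so that $\Om_n \setminus \Sig_n$ is exactly a single translate of (part of) $\Om_{n-1}$. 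The cleanest route: observe that $\Om_n$ slice over $x$ is $[\varphi(x) - \ell(x)/2,\ \varphi(x) - \ell(x)/2 + \ell(x) + 1]$ where $\ell(x)$ is the length of the $\Om_{n-1}$-slice and I've arranged $\varphi$ appropriately; one then writes the characteristic function of the $\Om_n$-slice as the sum of the $\Sig_n$-slice plus a leftover, and the leftover has length $\ell(x)$, matching the $\Om_{n-1}$-slice up to a vertical shift. The difficulty is to realize the subtraction $F - G$ as $H \cdot \sin(\pi y)$ with $H$ genuinely in $L^2$ and with the right spectrum: dividing by $\sin(\pi y)$ is dangerous at $y \in \Z$. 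This is where I expect the real work: one must show that after subtracting the correct $G \in PW(\Sig_n)$, the remainder $F - G$ vanishes at all $(x,n)$, $n \in \Z$ — i.e.\ the spatial function of $F-G$, when integrated against $e^{-2\pi i n y}$ over the vertical slice, gives zero — so that $F-G$ is divisible by $\sin(\pi y)$ as an entire function in $y$, and the quotient $H$ has exponential type matching $\Om_{n-1}$ and is square-integrable. Equivalently, I would construct $G$ directly: define $G$ as the Fourier transform of the function on $\Sig_n$ obtained from $f$ by a slicewise ``wrap-around'' (periodization of length $1$) of $f(x,\cdot)$ onto the interval $I(x) = [\varphi(x),\varphi(x)+1]$; then $F(x,n) = G(x,n)$ for all $n \in \Z$ because periodization does not change the Fourier coefficients at integer frequencies on a length-$1$ interval, so $F - G$ vanishes on $\R^{d-1}\times\Z$, and a Paley--Wiener/division argument in the single variable $y$ (for each fixed $x$, and then checking joint measurability and $L^2$ bounds via Fubini and the support constraints) yields $H \in PW(\Om_{n-1})$ with $F - G = H \sin(\pi y)$. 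The technical crux is justifying this division globally — that the quotient is entire of the correct exponential type in all $d$ variables and lies in $L^2(\R^d)$ — which I would handle by writing everything back on the spatial side, where division by $\sin(\pi y)$ becomes an explicit (if slightly delicate) operation on compactly supported $L^2$ functions in the last coordinate.
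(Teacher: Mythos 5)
Your overall strategy coincides with the paper's: part (ii) is proved on the spatial side from the support inclusions $\Sig_n \subset \Om_n$ and $\Om_{n-1} \pm \half u_n \subset \Om_n$, and for part (i) your ``slicewise periodization'' of $f(x,\cdot)$ onto the length-one slice of $\Sig_n$ is exactly the function $g$ that the paper constructs (the paper arrives at it by first defining $h$ and then setting $g = f - \tfrac{1}{2i}\bigl(h(\cdot,\cdot+\half)-h(\cdot,\cdot-\half)\bigr)$, but on $\Sig_n$ that difference equals $-\sum_{m\neq 0} f(x,y+m)$, so the two constructions agree). However, there is a concrete error in your setup: you anchor $\Sig_n$ at the right endpoint $\varphi(x)=b(x)$ of the slice $\{y:(x,y)\in\Om_{n-1}\}=[a(x),b(x)]$ and take the slab $[\varphi(x),\varphi(x)+1]$. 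Since the slice of $\Om_n$ over $x$ is $[a(x)-\half,\,b(x)+\half]$, this $\Sig_n$ protrudes above $\Om_n$ by $\half$, so the inclusion $\Sig_n\subset\Om_n$ on which part (ii) rests is false as written. (You appear to notice this later when you say $\Sig_n$ should be anchored so that $\varphi(x)+1$ is the top of the $\Om_n$-slice, which forces $\varphi(x)=b(x)-\half$.) The fix is to take any bounded measurable anchor $\psi$ with $a(x)-\half\le\psi(x)\le b(x)-\half$; the paper equivalently uses the slab $[\psi(x)-\half,\psi(x)+\half]$ with $a\le\psi\le b$.

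The second issue is that you correctly single out the division of $F-G$ by $\sin(\pi y)$ as the crux, and correctly propose to perform it on the spatial side, but you do not carry it out --- and this step is where the actual content of the lemma lies. The paper's resolution is an explicit finite telescoping sum: $h(x,y):=2i\sum_{k\ge 0}f(x,y-k-\half)$ for $y<\psi(x)$ and $h(x,y):=-2i\sum_{k\ge 0}f(x,y+k+\half)$ for $y>\psi(x)$. Because $f$ is supported in $\Om_n$, only boundedly many terms are nonzero, so $h\in L^2$; the same support constraint forces $h(x,\cdot)$ to vanish outside $[a(x),b(x)]$, i.e.\ $h\in L^2(\Om_{n-1})$; and a direct telescoping computation shows that $f-\tfrac{1}{2i}\bigl(h(\cdot,\cdot+\half)-h(\cdot,\cdot-\half)\bigr)$ vanishes off $\Sig_n$. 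If you instead pursue your route of abstractly dividing a $PW(\Om_n)$-function vanishing on $\R^{d-1}\times\Z$ by $\sin(\pi y)$, you must still prove that the quotient has slicewise support $[a(x),b(x)]$ (not merely that it is entire of finite exponential type in $y$), which in the end amounts to the same explicit computation; so this part of your argument is a genuine gap rather than a routine verification.
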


This is an extension to all dimensions of
\cite[Lemma 3.1]{Wal17} where the result was proved
for a convex, centrally symmetric polygon $\Om_n$ 
in two dimensions. In the two-dimensional case,
 the set $\Sig_n$ was taken to be a parallelogram
that shares with $\Om_n$ its two edges 
parallel to the vector $u_n$.
In dimensions greater than two, we cannot
in general take  the set $\Sig_n$ 
in \lemref{lemC3.1} to be a parallelepiped, nor
any other type of convex polytope 
inscribed in $\Om_n$ in a similar way. Instead, 
the role of the parallelogram will be played 
in higher dimensions 
by the cylindric sets introduced in \defref{defC1.1}.

The assumption in \lemref{lemC3.1} 
that the first $n-1$ 
vectors $u_1, u_2, \dots, u_{n-1}$ span the whole $\R^d$
is made so as to ensure that 
the zonotope $\Om_{n-1}$ has nonempty interior.

\subsection{}
\emph{Proof of \lemref{lemC3.1}}.
By identifying the elements $F$, $G$ and $H$ of the spaces
 $PW(\Om_n)$, $PW(\Sig_n)$ and $PW(\Om_{n-1})$
with the Fourier transforms of functions $f$, $g$ and $h$ from $L^2(\Om_n)$, 
$L^2(\Sig_n)$ and $L^2(\Om_{n-1})$
respectively, the conditions \ref{dc:i} and \ref{dc:ii}
can be reformulated as follows: any function $f \in L^2(\Om_n)$ 
admits a representation of the form
\begin{equation}
	\label{eqC3.4.1}
	f(x,y) = g(x,y) + 
	\frac{h(x,y+\half) - h(x,y-\half)}{2i}
	\quad \text{a.e.} \quad 
	(x,y) \in \R^{d-1} \times \R
\end{equation}
for some $g \in L^2(\Sig_n)$ and some $h \in L^2(\Om_{n-1})$;
and conversely, for any two functions
$g \in L^2(\Sig_n)$ and $h \in L^2(\Om_{n-1})$,
the function $f$ defined by \eqref{eqC3.4.1}
belongs to $L^2(\Om_n)$.

(Notice that  we think of a function from the space
$L^2(\Om_n)$, $L^2(\Sig_n)$ or $L^2(\Om_{n-1})$
as a function on the whole space $\R^d$ which is 
assumed to vanish a.e.\ outside the set $\Om_n$,
$\Sig_n$ or $\Om_{n-1}$ respectively.)

Let $\Pi_n \subset \R^{d-1}$ be the image of $\Om_{n-1}$ 
under the map $(x,y) \mapsto x$. If we denote
$u_j = (v_j, w_j)$ where $v_j \in \R^{d-1}$
and $w_j \in \R$, then $\Pi_n$ is the
origin-symmetric zonotope generated by
the vectors $v_1, v_2, \dots, v_{n-1}$.
These vectors span the whole space
$\R^{d-1}$ and hence the
zonotope $\Pi_n$ has nonempty interior.

For each $x \in \Pi_n$ we let
\begin{equation}
	\label{eqC3.4.2}
	S(x) := \{y \in \R : (x,y) \in \Om_{n-1}\}.
\end{equation}
By the definition of $\Pi_n$ the set $S(x)$
is nonempty. Since $\Om_{n-1}$ is closed and convex, the set
$S(x)$ is a closed interval. We may therefore denote
$S(x) := [a(x), b(x)]$. 
Since $\Om_{n-1}$ is a convex polytope, each one of
$a(x)$, $b(x)$ is a continuous, piecewise linear 
function on $\Pi_n$. (This can be
deduced from the representation of $\Om_{n-1}$ as
the intersection of  finitely many closed
halfspaces.)

The zonotope $\Om_n$ is the Minkowski sum of
$\Om_{n-1}$ and the line segment $[-\half u_n, \half u_n]$.
 Since we have assumed that 
$u_n = (0,0,\dots,0,1)$ this implies that
\begin{equation}
	\label{eqC3.4.7}
	\Om_n = \left\{(x,y) : x \in \Pi_n, \; a(x) - \half
	 \le y \le b(x) + \half \right\},
\end{equation}
and in particular we have
\begin{equation}
	\label{eqC3.4.9}
	(\Om_{n-1} - \half u_n) \cup
	(\Om_{n-1} + \half u_n) \subset \Om_n.
\end{equation}

Let   $\varphi: \Pi_n \to \R$ be any bounded,
measurable function satisfying
\begin{equation}
	\label{eqC3.4.3}
a(x) \leq \varphi(x) \leq b(x),
\quad x \in \Pi_n.
\end{equation}
(For example, one may 
take $\varphi(x) := a(x)$.)
We define a cylindric set $\Sig_n$ by
\begin{equation}
	\label{eqC3.4.5}
	\Sig_n := \left\{(x,y) : x \in \Pi_n, \; \varphi(x) - \half
  \le y \le \varphi(x) + \half \right\}.
\end{equation}
 It follows from  \eqref{eqC3.4.7}, \eqref{eqC3.4.3} and \eqref{eqC3.4.5}
 that $\Sig_n$ is a subset of $\Om_n$ (see Figure \ref{fig:sign}).


\begin{figure}[t]
\centering
\begin{tikzpicture}[scale=0.375]

\path 
	(15,6) coordinate (S)
	arc [radius=8, start angle=40, end angle=90] 
	coordinate (P)
	arc [radius=8, start angle=-90, end angle=-140] 
	coordinate (T);

\fill [fill=gray!35]
	(15,4) coordinate (A) -- ++(0,4) coordinate (B)
	arc [radius=8, start angle=40, end angle=90] 
	arc [radius=8, start angle=-90, end angle=-140] 
	coordinate (C) -- ++(0, -4) coordinate (D)
	arc [radius=8, start angle=-140, end angle=-90]
	arc [radius=8, start angle=90, end angle=40];

\draw[help lines]
	(S) -- ++(-3,-2) -- ++(-4, 1) -- (T)
	 -- ++(3,2) -- ++(4,-1) -- (S);

\draw[help lines, -stealth]  (0, 0 |- P) -- (18, 0 |- P);
\draw[help lines, -stealth]  (P |- 0, 0) -- (P |- 0, 18);
\draw (18, 0 |- P) node[anchor=west] {$\R^{d-1}$};
\draw (P |- 0, 18) node[anchor=south] {$\R$};

\draw [loosely dashed] (B) 
	arc [radius=8, start angle=40, end angle=90] 
	arc [radius=8, start angle=-90, end angle=-140];
\draw [loosely dashed] (D) 
	arc [radius=8, start angle=-140, end angle=-90]
	arc [radius=8, start angle=90, end angle=40];

\draw[densely dotted, black!75!white] (A) -- (A |- 0,0);
\draw[densely dotted, black!75!white] (D) -- (D |- 0,0);

\draw
	(A) -- ++(-3,-2) -- ++(-4, 1) -- (D)
	-- (C) -- ++(3,2) -- ++(4,-1) -- (B) -- (A);

\draw[stealth-stealth,  line width=0.5mm]  (D |- 0,0) -- (A |- 0,0);
\draw (P |- 0,-1) node {\small $\Pi_n$};

\draw (12.25,13.75) node {\small $\Om_n$};
\draw[help lines] (16.75,6) node {\small $\Om_{n-1}$};
\draw (T) node[xshift=-0.5cm] {\small $\Sig_n$};

\end{tikzpicture}
	\caption{The various sets involved in the statement and 
		proof of \lemref{lemC3.1} are illustrated.
		The larger polygon represents $\Om_n$, 
		the smaller polygon is $\Om_{n-1}$,
		while the shaded region is the cylindric set $\Sig_n$.}
	\label{fig:sign}
\end{figure}
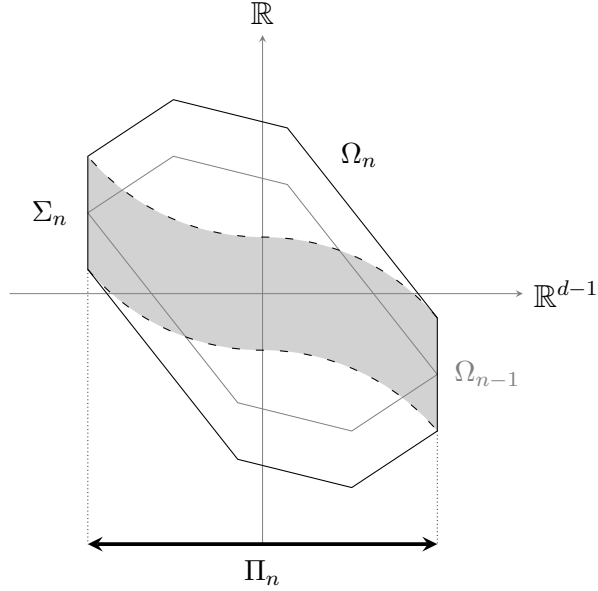


Suppose now that two functions
$g \in L^2(\Sig_n)$ and $h \in L^2(\Om_{n-1})$
are given, and let $f$ be  the function defined by \eqref{eqC3.4.1}.
Then $f$ is supported on the set
\[
\Sig_n \cup
	(\Om_{n-1} - \half u_n) \cup
	(\Om_{n-1} + \half u_n)
\]
which is contained in $\Om_n$. This shows that $f$
belongs to $L^2(\Om_n)$.

Conversely, suppose that we are given a function
$f \in L^2(\Om_n)$. We will show that $f$
admits a representation of the form
\eqref{eqC3.4.1}
where $g \in L^2(\Sig_n)$ and $h \in L^2(\Om_{n-1})$.

First we define the function $h$ by
\begin{gather}
	\label{eqC3.2.1}
	h(x,y) :=  2i \sum_{k \geq 0} f(x,y-k -\half), 
	\quad x \in \Pi_n, \quad y < \varphi(x),\\[4pt]
	\label{eqC3.2.2}
	h(x,y) := - 2i \sum_{k \geq 0} f(x,y+k +\half), 
	\quad x \in \Pi_n, \quad y > \varphi(x),
\end{gather}
and $h(x,y) := 0$ if $x \notin \Pi_n$.
(Notice that it is not necessary to define the values of $h$ on
the set of points $(x,y)$ such that $x \in \Pi_n$,
$y= \varphi(x)$, as this is a set of measure zero.)

We observe that there is a constant $M$ such that 
the nonzero terms in the sum in either
\eqref{eqC3.2.1} or  \eqref{eqC3.2.2} 
correspond only to values of $k$ that are
not greater than $M$. This is due to the
assumption that  $f$ is supported on $\Om_n$.
Indeed, one can check using  \eqref{eqC3.4.7} 
and \eqref{eqC3.4.3} that
it suffices to take $M := \max \, (b(x)-a(x))$,
$x \in \Pi_n$. This shows that $h$ is a well-defined
function in $L^2(\R^d)$.  It also follows from
\eqref{eqC3.4.7} that $h(x,y) = 0$ whenever
$x \in \Pi_n$ and $y \notin [a(x), b(x)]$.
We conclude that $h$ is supported on
$\Om_{n-1}$, so  $h \in L^2(\Om_{n-1})$.

Next we define the function $g$ by
\begin{equation}
	\label{eqC3.4.12}
	g(x,y) := f(x,y)  - 
	\frac{h(x,y+\half) - h(x,y-\half)}{2i},
	\quad 
	(x,y) \in \R^{d-1} \times \R.
\end{equation}
Then $g$ is a function in $L^2(\R^d)$ and 
\eqref{eqC3.4.1} is satisfied. It remains
only to show that 
\begin{equation}
	\label{eqC3.4.15}
	g(x,y) = 0
	\quad \text{a.e.} \quad
	(x,y) \in \R^d \setminus \Sig_n.
\end{equation}
It will be enough to verify this for $x \in \Pi_n$. Since
$(x,y) \notin \Sig_n$ we have two possibilities,
either $y < \varphi(x) - \half$ or $y >\varphi(x) + \half$.
In the former case, we obtain from \eqref{eqC3.2.1} that
\[
\frac{h(x,y+\half) - h(x,y-\half)}{2i} =
 \sum_{k \geq 0} f(x,y-k) -  \sum_{k \geq 0} f(x,y-k-1)
= f(x,y) \quad \text{a.e.,}
\]
while in the latter case, \eqref{eqC3.2.2} implies that
\[
\frac{h(x,y+\half) - h(x,y-\half)}{2i} =
- \sum_{k \geq 0} f(x,y+k+1) +  \sum_{k \geq 0} f(x,y+k)
= f(x,y) \quad \text{a.e.} 
\]
The condition \eqref{eqC3.4.15} is therefore established.
We have thus constructed the desired representation for 
the function $f$, and this completes the proof of \lemref{lemC3.1}.
\qed


\section{Construction of Riesz bases for zonotopes}
\label{sect:proof}

In this section we prove our main result, \thmref{thmA1}. 
The theorem asserts that if
$\Om \sbt \R^d$ is a convex, centrally
symmetric  polytope and all its faces 
of all dimensions are also centrally symmetric,
then there is a set $\Lam \sbt \R^d$ such that
the system of exponentials  $E(\Lam)$
is a Riesz basis in $L^2(\Om)$.
By \propref{propP1.12}, a convex polytope 
$\Om \sbt \R^d$ satisfies the assumptions
in \thmref{thmA1} if and only if it is a zonotope.
Hence \thmref{thmA1} can be equivalently stated in 
 the following way:

\begin{thm}
\label{thmD1}
Let $\Om_n \sbt \R^d$ be an origin-symmetric zonotope 
of the form \eqref{eqC3.1.1} which is generated by $n$ vectors 
 $u_1, u_2, \dots, u_n$ that span the whole $\R^d$.
Then there is a set $\Lam_n \sbt \R^d$ such that
the system  $E(\Lam_n)$
is a Riesz basis in $L^2(\Om_n)$.
\end{thm}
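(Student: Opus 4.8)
The plan is to argue by induction on the number $n$ of generating vectors, using \lemref{lemC3.1} to peel off one segment at a time together with \lemref{lemC1.1} and the Paley--Wiener stability result, \propref{propP1.9}. First I would set up the base case: if $n = d$ and the vectors $u_1,\dots,u_d$ are linearly independent, then $\Om_d$ is an affine image of a cube, so $L^2(\Om_d)$ carries a Riesz basis of exponentials (a lattice, transported by the affine map), and in fact an orthonormal one. More generally one needs a base case covering $n \ge d$ with the vectors spanning $\R^d$ but possibly $n = d$ only; so the induction should really be on $n$, with the statement ``for every origin-symmetric zonotope in $\R^d$ generated by $n$ spanning vectors there is a Riesz basis of exponentials,'' the case $n = d$ being the affine-cube case just described.

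For the inductive step, suppose the result holds for zonotopes generated by $n-1$ spanning vectors (in any dimension), and let $\Om_n \sbt \R^d$ be generated by spanning vectors $u_1,\dots,u_n$. After an affine change of variables I may assume $u_n = (0,\dots,0,1)$ and that $u_1,\dots,u_{n-1}$ already span $\R^d$ --- this is the generic situation, and I will return below to the degenerate one. Apply \lemref{lemC3.1}: every $F \in PW(\Om_n)$ decomposes as $F(x,y) = G(x,y) + H(x,y)\sin(\pi y)$ with $G \in PW(\Sig_n)$, $H \in PW(\Om_{n-1})$, and conversely. Now $\Om_{n-1} \sbt \R^d$ is an origin-symmetric zonotope generated by $n-1$ spanning vectors, so by the induction hypothesis there is $\Lam' \sbt \R^d$ with $E(\Lam')$ a Riesz basis in $L^2(\Om_{n-1})$, equivalently $\Lam'$ is a set of uniqueness and interpolation for $PW(\Om_{n-1})$. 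Also $\Sig_n$ is a cylindric set whose base $\Pi_n$ is an origin-symmetric zonotope in $\R^{d-1}$ generated by $n-1$ vectors spanning $\R^{d-1}$; by the induction hypothesis (applied in dimension $d-1$) there is $\Gam \sbt \R^{d-1}$ with $E(\Gam)$ a Riesz basis in $L^2(\Pi_n)$, and then \lemref{lemC1.1} gives that $E(\Gam \times \Z)$ is a Riesz basis in $L^2(\Sig_n)$, i.e.\ $\Gam \times \Z$ is a set of uniqueness and interpolation for $PW(\Sig_n)$. The candidate frequency set for $\Om_n$ is then $\Lam_n := (\Gam \times \Z) \,\cup\, (\Lam' + (0,\dots,0,\tfrac12))$, the shift by $\tfrac12$ in the last coordinate being chosen so that $\sin(\pi y)$ does \emph{not} vanish on the second family; one checks $\Lam_n$ is uniformly discrete after a generic perturbation (see below). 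To see $\Lam_n$ is a set of interpolation for $PW(\Om_n)$: given $\ell^2$ data on $\Lam_n$, first solve the interpolation problem for $H \in PW(\Om_{n-1})$ with nodes $\Lam'$ and values prescribed by $F(\mu)/\sin(\pi \mu_d)$ at $\mu \in \Lam'+(0,\dots,0,\tfrac12)$ (legitimate since $\sin(\pi\mu_d) = \pm1 \ne 0$ there, and dividing by a bounded-below factor preserves $\ell^2$); then $F - H\sin(\pi y)$ restricted to $\Gam \times \Z$ is again $\ell^2$ (by \propref{propP1.2}, since $\Gam\times\Z$ is uniformly discrete and $H\sin(\pi y)$ lies in $PW(\Om_n)$), so solve for $G \in PW(\Sig_n)$ with nodes $\Gam \times \Z$; then $F = G + H\sin(\pi y) \in PW(\Om_n)$ interpolates the data. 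For uniqueness: if $F \in PW(\Om_n)$ vanishes on $\Lam_n$, write $F = G + H\sin(\pi y)$; on $\Lam'+(0,\dots,0,\tfrac12)$ we get $H = 0$, so $H \equiv 0$ by uniqueness for $PW(\Om_{n-1})$, hence $F = G$ vanishes on $\Gam \times \Z$, so $G \equiv 0$ by uniqueness for $PW(\Sig_n)$, whence $F \equiv 0$. By \propref{propP1.5}, $E(\Lam_n)$ is a Riesz basis in $L^2(\Om_n)$.

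The main obstacle --- and the point where \propref{propP1.9} enters --- is that the hypothesis of \lemref{lemC3.1} requires $u_1,\dots,u_{n-1}$ themselves to span $\R^d$, whereas a priori it is only $u_1,\dots,u_n$ together that span. If no $n-1$ of the $u_j$ span $\R^d$ then every vector is indispensable, which forces $n = d$ (the affine-cube base case, already handled). Otherwise, after reindexing we may assume $u_1,\dots,u_{n-1}$ span $\R^d$; but then we must also arrange $u_n = (0,\dots,0,1)$ by an affine map, which is fine, and additionally we must ensure that when we recurse on $\Om_{n-1}$ and on $\Pi_n$ the spanning hypotheses again hold --- which they do, since $u_1,\dots,u_{n-1}$ span $\R^d$ and their projections $v_1,\dots,v_{n-1}$ span $\R^{d-1}$. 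The remaining subtlety is that $\Lam_n = (\Gam\times\Z) \cup (\Lam'+(0,\dots,0,\tfrac12))$ need not be uniformly discrete: the two families could accumulate on each other. Here I invoke \propref{propP1.9} twice. Since $E(\Lam')$ is a Riesz basis in $L^2(\Om_{n-1})$, a sufficiently small perturbation $\widetilde\Lam'$ of $\Lam'$ is still one; using that $\Lam'$ and $\Gam\times\Z$ are each uniformly discrete, a generic small translate of $\Lam'$ (equivalently, choosing the shift in the last coordinate to be some $s \notin \Z$ with $s$ close to $\tfrac12$, and perturbing the other coordinates slightly) makes the union uniformly discrete while keeping $\sin(\pi s) \ne 0$ bounded away from $0$ on the shifted family and keeping $E(\widetilde\Lam')$ a Riesz basis. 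Likewise one may perturb $\Gam\times\Z$ within the tolerance given by \propref{propP1.9} for $L^2(\Sig_n)$. With these perturbations the interpolation/uniqueness argument above goes through verbatim, now with a genuinely uniformly discrete $\Lam_n$, so \propref{propP1.5} applies and the induction is complete.
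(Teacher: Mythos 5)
Your overall strategy coincides with the paper's: induction on $n$ with base case $n=d$, the decomposition $F=G+H\sin(\pi y)$ from \lemref{lemC3.1}, \lemref{lemC1.1} for the cylindric piece, and \propref{propP1.9} to adjust the frequencies attached to $\Om_{n-1}$. But there is a genuine gap at exactly the point where the argument is delicate, namely how you guarantee that $|\sin(\pi y)|$ is bounded away from zero on the second family of frequencies. You propose a \emph{rigid translate} $\Lam'+(0,\dots,0,s)$ with $s$ near $\half$. This cannot work in general: the induction hypothesis gives no control on the last coordinates of the points of $\Lam'$, which may well be dense modulo $1$ (already when $\Om_{n-1}$ is a parallelepiped, $\Lam'$ is a full-rank lattice whose last coordinates are typically dense mod $1$, and at later stages $\Lam'$ is a perturbed union with no arithmetic structure). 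For such a $\Lam'$ and \emph{any} fixed shift $s$, the quantities $\sin(\pi(y+s))$ come arbitrarily close to $0$ along the shifted family; your parenthetical claim that $\sin(\pi\mu_d)=\pm 1$ there is unjustified, the infimum in the analogue of \eqref{eqD2.10} is $0$, and the division defining the data for $H$ no longer maps $\ell^2$ to $\ell^2$, so the interpolation step collapses. The paper's fix is a \emph{non-rigid, point-by-point} perturbation: with $\eta$ the Paley--Wiener tolerance for $(\Om_{n-1},\Lam_{n-1})$, each point of $\Lam_{n-1}$ has its last coordinate moved to the nearest point of $\R\setminus\bigcup_{k\in\Z}(k-\eta,k+\eta)$. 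Each point moves by at most $\eta$, so \propref{propP1.9} still applies, and by construction $\inf|\sin(\pi y)|\ge\sin(\pi\eta)>0$ on the perturbed set. This is the idea missing from your write-up.

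Two smaller points. First, your interpolation step solves for $H$ before $G$, prescribing $c(\mu)/\sin(\pi\mu_d)$ as the data for $H$; the resulting $F=G+H\sin(\pi y)$ then takes the value $G(\mu)+c(\mu)$, not $c(\mu)$, at the points $\mu$ of the second family. The order must be reversed: first find $G$ interpolating $c$ on $\Gam\times\Z$, then prescribe $(c(\mu)-G(\mu))/\sin(\pi\mu_d)$ for $H$, using \propref{propP1.2} to see that these values are $\ell^2$. Second, the uniform discreteness of the union $\Lam_n$ that you spend effort arranging is not actually needed (and a generic rigid translate would not secure it either, for the same density reason as above): \propref{propP1.5} only asks for uniqueness and interpolation, the argument only uses uniform discreteness of $\Lam'_{n-1}$ and of $\Gam\times\Z$ separately, and uniform discreteness of $\Lam_n$ then follows a posteriori from its being a set of interpolation.
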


\begin{proof}
We may  assume that no two of the 
vectors  $u_1, u_2, \dots, u_n$  are collinear. 
We will prove the assertion by induction  both on the 
dimension $d$ and on the number $n$ of the vectors 
generating the zonotope $\Om_n$. 

The induction base case is when $n=d$ and then $\Om_n$ is 
generated by $d$ linearly independent vectors in $\R^d$.
In this case $\Om_n$ is a $d$-dimensional
parallelepiped and so we know that it
admits a Riesz basis (in fact, an orthogonal basis) of exponentials.

Suppose now that $n > d$, and therefore the
dimension $d$ must be at least two and 
the vectors  $u_1, u_2, \dots, u_n$ 
are not linearly independent. In this case  we may reorder
them so that the 
 first $n-1$ vectors  $u_1, u_2, \dots, u_{n-1}$ 
already span the whole space $\R^d$. We may also assume,
by applying an invertible linear map, that
the last vector $u_n = (0,0,\dots,0,1)$. 

We now observe that all
the assumptions in \lemref{lemC3.1} are satisfied.
It  thus follows from the lemma that there exists a
cylindric set $\Sig_n \sbt \R^d$   whose base $\Pi_n$
 is a zonotope in $\R^{d-1}$,
such that any function $F \in PW(\Om_n)$ 
can be represented in the form \eqref{eqC3.1.2}
where $G \in PW(\Sig_n)$ and $H \in PW(\Om_{n-1})$;
and conversely, given any two functions
$G \in PW(\Sig_n)$ and $H \in PW(\Om_{n-1})$,
the function $F$ defined by \eqref{eqC3.1.2}
belongs to  $PW(\Om_n)$.

The base $\Pi_n$ of the cylindric set $\Sig_n$ is a zonotope in
 $\R^{d-1}$ with nonempty interior,
 hence by the inductive hypothesis
there is a set $\Gam_{n} \sbt \R^{d-1}$ such that
the system
$E(\Gam_n)$ is a Riesz basis in the space
$L^2(\Pi_n)$. By \lemref{lemC1.1} 
the system  $E(\Gam_n \times \Z)$ is then
a Riesz basis in $L^2(\Sig_n)$.

The zonotope $\Om_{n-1}$ is generated by the $n-1$
vectors $u_1, u_2, \dots, u_{n-1}$ in $\R^d$ 
whose linear span is the whole $\R^d$. Hence,
again by the inductive hypothesis, there is a 
set $\Lam_{n-1} \sbt \R^d$ such that the system
$E(\Lam_{n-1})$ is a Riesz basis in $L^2(\Om_{n-1})$.

We now invoke the Paley-Wiener stability result given
in \propref{propP1.9}. The result says that there
is a constant $\eta = \eta(\Om_{n-1}, \Lam_{n-1})>0$
such that if $\Lam'_{n-1} \sbt \R^d$ is any set obtained from
$\Lam_{n-1}$ by perturbing each element by 
 distance at most $\eta$, then the system
$E(\Lam'_{n-1})$ is also a Riesz basis in $L^2(\Om_{n-1})$.

Recall that we denote a point in $\R^d = \R^{d-1} \times \R$ 
as $(x,y)$
where $x \in \R^{d-1}$ and $y \in \R$. Let a 
mapping $\phi: \R^d \to \R^d$ be defined by the
requirement that if $\phi(x,y) = (x',y')$, then
$x'=x$ and $y'$ is a point closest to $y$ in the set
\begin{equation}
\label{eqD2.8}
\R \setminus \bigcup_{k \in \Z} (k-\eta, k+\eta)
\end{equation}
(notice that if $y$ is an integer then there 
are two possible choices for $y'$).
It is obvious that such a mapping $\phi$ exists whenever
$0 < \eta \leq \half$, which we may assume by taking $\eta$
smaller if necessary. 
We then consider the set $\Lam'_{n-1} := \phi(\Lam_{n-1})$
defined to be the image of $\Lam_{n-1}$ under the mapping $\phi$.
Then each element of $\Lam'_{n-1}$ is obtained 
by perturbing an element of the set $\Lam_{n-1}$ by 
 distance at most $\eta$. Hence the system
$E(\Lam'_{n-1})$ is a Riesz basis in $L^2(\Om_{n-1})$
by  \propref{propP1.9}.

We now define
\begin{equation}
	\label{eqD2.3}
\Lam_n := (\Gam_n \times \Z) \cup \Lam'_{n-1}
\end{equation}
and claim that $E(\Lam_n)$ is a Riesz basis in the space $L^2(\Om_n)$.
The proof of this claim will follow the technique from
\cite{Wal17}. By \propref{propP1.5}
it would be enough if we  show that $\Lam_n$ is  a set of
both uniqueness and interpolation for the space $PW(\Om_n)$. 

We start with the uniqueness part of the proof.
 Let $F \in PW(\Om_n)$ be a
function such that $F(\lam) = 0$ for all $\lam \in
\Lam_n$. The function $F$ has a representation
  in the form \eqref{eqC3.1.2}
where $G \in PW(\Sig_n)$ and $H \in PW(\Om_{n-1})$.
It follows from \eqref{eqC3.1.2}
 that $F(x,y)=G(x,y)$ for every $(x,y) \in \R^{d-1} 
\times \Z$, hence using \eqref{eqD2.3} this implies
that $G$ vanishes on the
set $\Gam_n \times \Z$. But the system  $E(\Gam_n \times \Z)$ is
a Riesz basis in $L^2(\Sig_n)$, so in particular
$\Gam_n \times \Z$ is a 
set of uniqueness for $PW(\Sig_n)$. We conclude that
$G$ must vanish identically.
The expression \eqref{eqC3.1.2} thus becomes
\begin{equation}
	\label{eqD2.5}
	F(x,y) =   H(x,y) \sin (\pi y), 
	\quad (x,y) \in \R^{d-1} \times \R.
\end{equation}
Again using \eqref{eqD2.3} this now implies that
$H(x,y) \sin (\pi y) = 0$ for every
$(x,y) \in \Lam'_{n-1}$. But if $(x,y)$ is
a point in $\Lam'_{n-1}$ then $y$ lies in
the set \eqref{eqD2.8} and hence
$\sin (\pi y) \neq 0$, and it follows that
$H(x,y) = 0$. Since $E(\Lam'_{n-1})$ is
a Riesz basis in $L^2(\Om_{n-1})$ then
in particular $\Lam'_{n-1}$ is a
set of uniqueness for $PW(\Om_{n-1})$,
hence $H$ must also vanish identically. We conclude
that $F$ is identically zero, and $\Lam_n$ is a set of
uniqueness for $PW(\Om_n)$. 

We next turn to the interpolation part of the proof. 
Let $\{c(\lam)\}$ be scalar values in $\ell^2(\Lam_n)$,
and we must show that there is 
$F \in PW(\Om_n)$ satisfying $F(\lam) = c(\lam)$
 for every $\lam \in \Lam_n$. We will find the
solution $F$ based on the representation \eqref{eqC3.1.2}.
First we find a function $G \in PW(\Sig_n)$ such that
$G(\lam) = c(\lam)$ for every $\lam \in \Gam_n \times \Z$.
This is possible since $E(\Gam_n \times \Z)$ is
a Riesz basis in $L^2(\Sig_n)$, and hence
$\Gam_n \times \Z$ is a set of interpolation for the 
space $PW(\Sig_n)$. Now consider the system
of values
\begin{equation}
	\label{eqD2.9}
	\left\{
	\frac{c(x,y) - G(x,y)}{\sin (\pi y)} \; : \;
	(x,y) \in \Lam'_{n-1} \right\}.
\end{equation}
We claim that these values are in $\ell^2(\Lam'_{n-1})$.
Indeed, observe that by \propref{propP1.2}  the
values of  the function $G$ on the uniformly  discrete 
set $\Lam'_{n-1}$ are in $\ell^2(\Lam'_{n-1})$.
Furthermore, if $(x,y)$ is a point in $\Lam'_{n-1}$ then $y$ lies in
the set \eqref{eqD2.8}, and  we therefore have
\begin{equation}
\label{eqD2.10}
\inf_{(x,y) \in \Lam'_{n-1}} |\sin (\pi y)| >0.
\end{equation}
It follows from these properties that the values in 
\eqref{eqD2.9} indeed belong to  $\ell^2(\Lam'_{n-1})$.
Since  $E(\Lam'_{n-1})$ is a Riesz basis in $L^2(\Om_{n-1})$, 
then $\Lam'_{n-1}$  is a set of interpolation for
 $PW(\Om_{n-1})$  
and hence there is $H \in PW(\Om_{n-1})$ such that
\begin{equation}
	\label{eqD2.12}
	H(x,y) = 
	\frac{c(x,y) - G(x,y)}{\sin (\pi y)}, \quad
	(x,y) \in \Lam'_{n-1}.
\end{equation}
Using the fact that $\sin (\pi y) = 0$ for every
$(x,y) \in \Gam_n \times \Z$, we  can now
conclude that the function  $F$ defined by \eqref{eqC3.1.2}
solves the interpolation problem 
$F(\lam) = c(\lam)$, $\lam \in \Lam_n$,
and this function belongs to  $PW(\Om_n)$.
We thus obtain that the set $\Lam_n$ is also a set of 
interpolation for $PW(\Om_n)$. This completes the proof of
\thmref{thmD1}.
\end{proof}



\begin{thebibliography}{99999999}

\bibitem[BFW06]{BFW06}
H. Behmard, A. Faridani, D. Walnut, 
Construction of sampling theorems for unions of shifted lattices.
Sampl. Theory Signal Image Process. \textbf{5} (2006), no. 3, 297--319.

\bibitem[Fug74]{Fug74}
B. Fuglede, Commuting self-adjoint partial differential operators 
and a group theoretic problem. 
J. Funct. Anal. \textbf{16} (1974), 101--121.

\bibitem[GL14]{GL14}
S. Grepstad, N. Lev, Multi-tiling and Riesz bases. 
Adv. Math. \textbf{252} (2014), 1--6.

\bibitem[GL18]{GL18}
S. Grepstad, N. Lev, 
Riesz bases, Meyer's quasicrystals, and bounded remainder sets.
Trans. Amer. Math. Soc. \textbf{370} (2018), no. 6, 4273--4298.

\bibitem[Kol15]{Kol15}
M. Kolountzakis, Multiple lattice tiles and Riesz bases of exponentials. 
Proc. Amer. Math. Soc. \textbf{143} (2015), no. 2, 741--747.

\bibitem[KN15]{KN15}
G. Kozma, S. Nitzan, Combining Riesz bases. 
Invent. Math. \textbf{199} (2015), no. 1, 267--285.

\bibitem[KN16]{KN16}
G. Kozma, S. Nitzan, Combining Riesz bases in $\R^d$. 
Rev. Mat. Iberoam. \textbf{32} (2016), no. 4, 1393--1406.

\bibitem[LM19]{LM19}
N. Lev, M. Matolcsi, The Fuglede conjecture for convex domains is true in all dimensions.
Preprint, \texttt{arXiv:1904.12262}.

\bibitem[LR00]{LR00}
Y. Lyubarskii, A. Rashkovskii, Complete interpolating sequences 
for Fourier transforms supported by convex symmetric polygons. 
Ark. Mat. \textbf{38} (2000), no. 1, 139--170.

\bibitem[OU16]{OU16}
A. Olevskii, A. Ulanovskii, 
Functions with disconnected spectrum: sampling, interpolation, translates.
American Mathematical Society, 2016.

\bibitem[PW34]{PW34}
R. Paley, N. Wiener, 
Fourier transforms in the complex domain.
American Mathematical Society, 1934.

\bibitem[Sch14]{Sch14}
R. Schneider, Convex bodies: the Brunn-Minkowski theory.
Second expanded edition. Cambridge University Press, 2014. 

\bibitem[Wal17]{Wal17}
D. Walnut, A sampling theorem for symmetric polygons. 
2017 International Conference on Sampling Theory and Applications (SampTA),
IEEE, 2017, 18--21.

\bibitem[You01]{You01}
R. Young, An introduction to nonharmonic Fourier series.
Revised first edition. Academic Press, 2001.

\end{thebibliography}
\end{document}